\newtheorem{theorem}{Theorem}[section]
\newtheorem*{Cav}{Caveat}
\newtheorem{Qu}{Question}
\newtheorem{lemma}[theorem]{Lemma}
\newtheorem{corollary}[theorem]{Corollary}
\theoremstyle{definition}
\newtheorem{example}[theorem]{Example}
\theoremstyle{remark}
\newtheorem{remark}[theorem]{Remark}
\numberwithin{equation}{section}
\begin{document}
\title[The $L_p$-Christoffel-Minkowski problem]
 {Deforming a hypersurface by principal radii of curvature and support function}

\author[M.N. Ivaki]{Mohammad N. Ivaki}
\address{Department of Mathematics, University of Toronto, Ontario,
M5S 2E4, Canada}
\email{m.ivaki@utoronto.ca}

\dedicatory{}
\subjclass[2010]{}
\keywords{Curvature flow, $L_p$-Christoffel-Minkowski problem, Monotonicity, Regularity estimates}
\begin{abstract}
We study the motion of smooth, closed, strictly convex hypersurfaces in $\mathbb{R}^{n+1}$ expanding in the direction of their normal vector field with speed depending on the $k$th elementary symmetric polynomial of the principal radii of curvature $\sigma_k$ and support function $h$. A homothetic self-similar solution to the flow that we will consider in this paper, if exists, is a solution of the well-known $L_p$-Christoffel-Minkowski problem $\varphi h^{1-p}\sigma_k=c$. Here $\varphi$ is a preassigned positive smooth function defined on the unit sphere, and $c$ is a positive constant. For $1\leq k\leq n-1, p\geq k+1$, assuming the spherical hessian of $\varphi^{\frac{1}{p+k-1}}$ is positive definite, we prove the $C^{\infty}$ convergence of the normalized flow to a homothetic self-similar solution. One of the highlights of our arguments is that we do not need the constant rank theorem/deformation lemma of \cite{Guan-Ma} and thus we give a partial answer to a question raised in \cite{GuanXia}. Moreover, for $k=n, p\geq n+1$, we prove the $C^{\infty}$ convergence of the normalized flow to a homothetic self-similar solution without imposing any further condition on $\varphi.$ In the final section of the paper, for $1\leq k<n$, we will give an example that spherical hessian of $\varphi^{\frac{1}{p+k-1}}$ is negative definite at some point and the solution to the flow loses its smoothness.
\end{abstract}

\maketitle
\section{An expanding flow}
Suppose $F_0:M^n\to \mathbb{R}^{n+1}$ is a smooth parametrization of a closed, strictly convex hypersurface $M_0$ and suppose the origin of $\mathbb{R}^{n+1}$ is in the interior of the region enclosed by $M_0$. In this paper, we study the long-time behavior of a family of hypersurfaces ${M_t}$ given by the smooth map $F:M^n\times [0,T)\to \mathbb{R}^{n+1}$ satisfying the initial value problem
\begin{align}\label{F-param}
\left\{
  \begin{array}{ll}
  \partial_tF(x,t)=\varphi(\nu(x,t))\langle F(x,t),\nu(x,t)\rangle^{2-p}\frac{E_{n-k}}{E_n}(x,t)\nu(x,t); \\
   F(\cdot,0)=F_0(\cdot),
  \end{array}
\right.
\end{align}
where $p\in \mathbb{R},$ $E_i$ is the $i$th elementary symmetric polynomial of principal curvatures for $0\leq i\leq n$ normalized so that $E_{i}(1,\ldots,1)=1$ (and $E_0\equiv 1$), $\nu(\cdot,t)$ is the outer unit normal vector of $M_t:=F(M^n,t)$ and $\varphi$ is a positive smooth function defined on the unit sphere $\mathbb{S}^n.$

Assuming $M_t$ is strictly convex, its support function as a function on the unit sphere is given by
\begin{align*}
h_{M_t}(x)=h(x,t)&:=\langle F(\nu^{-1}(x,t),t),x\rangle.
\end{align*}
Write $\bar{g}$ and $\bar{\nabla}$ for the standard round metric and the Levi-Civita connection of $\mathbb{S}^n$. Recall that the principal radii of curvature are the eigenvalues of the matrix
\[r_{ij}:=\bar{\nabla}_i\bar{\nabla}_j h+\bar{g}_{ij}h\]
with respect to $\bar{g}$.
Also write $\sigma_k$ for the $k$th elementary symmetric polynomial of the principal radii of curvature, normalized so that $\sigma_k(1,\ldots,1)=1.$

We also define
\begin{align}\label{norsol}
\tilde{M}_{t}:=\left\{
  \begin{array}{ll}
    \left(\frac{\int_{\mathbb{S}^n}\frac{1}{\varphi}dx}{\int_{\mathbb{S}^n}\frac{h^p(x,t)}{\varphi}dx}\right)^{\frac{1}{p}}M_t, & \hbox{if $p\neq 0$;} \\
    \exp\left(-\frac{1}{\int_{\mathbb{S}^n}\frac{1}{\varphi}dx}\int_{\mathbb{S}^n}\frac{\log h(x,t)}{\varphi}dx\right)M_t, & \hbox{if $p= 0$.}
  \end{array}
\right.
\end{align}
Here $dx$ is the Lebesgue measure on $\mathbb{S}^n$ and $\omega_n=\int_{\mathbb{S}^n}dx.$

By direct calculation, we find $h:\mathbb{S}^n\times [0,T)\to \mathbb{R}$ satisfies
\begin{align}\label{unf}
\left\{
  \begin{array}{ll}
\partial_th=\varphi h^{2-p}\sigma_k;\\
h(x,0)=\langle F_0(\nu^{-1}(x)),x\rangle.
  \end{array}
\right.
\end{align}
We consider a normalization of the flow (\ref{unf}) given by
\begin{align}\label{nf}
\partial_{\tau}h=\varphi h^{2-p}\sigma_k-h\frac{\int_{\mathbb{S}^n}h\sigma_kdx}{\int_{\mathbb{S}^n}\frac{1}{\varphi}dx}.
\end{align}
\begin{Cav}
We always distinguish between the solutions to (\ref{unf}) and (\ref{nf}) respectively through the parameters $t,\tau$.
\end{Cav}
Note that for $p\neq0,$
\begin{align*}
\frac{d}{d\tau}\int_{\mathbb{S}^n}\frac{h^p(x,\tau)}{\varphi(x)}dx&=p\int_{\mathbb{S}^n}h\sigma_kdx-
p\int_{\mathbb{S}^n}\frac{h^p}{\varphi}dx\frac{\int_{\mathbb{S}^n}h\sigma_kdx}{\int_{\mathbb{S}^n}\frac{1}{\varphi}dx}\\
&=p\frac{\int_{\mathbb{S}^n}h\sigma_kdx}{\int_{\mathbb{S}^n}\frac{1}{\varphi}dx}\left(\int_{\mathbb{S}^n}\frac{1}{\varphi}dx-\int_{\mathbb{S}^n}\frac{h^p}{\varphi}dx\right).
\end{align*}
If the solution to (\ref{nf}) at time $\tau=0$ satisfies $\int_{\mathbb{S}^n}\frac{h^p}{\varphi}dx=\int_{\mathbb{S}^n}\frac{1}{\varphi}dx$, then at any later time this identity still holds. If $p=0,$ we always have $\frac{d}{d\tau}\int_{\mathbb{S}^n}\frac{\log h(x,\tau)}{\varphi(x)}dx=0.$ Note the support functions of $\tilde{M}_t$ after a suitable time re-parametrization solve (\ref{nf}).

Our motivation to study the flow (\ref{unf}) is due to the significance of its solitons in convex geometry. A positive homothetic self-similar solution of (\ref{unf}), when exists, is a solution to
\begin{align}\label{soliton}
\varphi h^{1-p}\sigma_k=c.
\end{align}
for some $c>0.$ One would like to find necessary and sufficient conditions on a function $\varphi$ such that a positive strictly convex solution exists. Here the strict convexity of a solution, $h$, is understood as the strict convexity of the associated closed hypersurface. The pairs $(p=1,k=1)$, $(p=1,k=n)$, $(p\neq 1,k=n)$ of this equation are known in order as the Christoffel problem, the Minkowski problem and the $L_p$-Minkowski problem. In general, this equation is known as the $L_p$-Christoffel-Minkowski problem. This equation is of considerable interest in convex geometry, and it is related to the problem of existence of a convex body (a compact convex set with non-empty interior) whose $L_p$ surface area of order $n-k$ is prescribed.

Let us briefly explain how (\ref{soliton}) arises naturally in the $L_p$ Brunn-Minkowski theory. Good references for this material are \cites{Lut1,Schneider}. Let $p\geq 1$ and $\zeta,\eta\geq 0$ and let $K,L$ be two convex bodies with the origin of $\mathbb{R}^{n+1}$ in their interiors. In the following, $\zeta\cdot K:=\zeta^{\frac{1}{p}}K$ and $\eta\cdot L:=\eta^{\frac{1}{p}}L$. Define the $L_p$-linear combination $\zeta\cdot K+_{p}\eta\cdot L$ as the convex body whose support function is given by $(\zeta h_K^p+\eta h_L^p)^{\frac{1}{p}}.$ The mixed $L_p$-Quermassintegrals $W_{p,0}(K,L),\ldots, W_{p,n}(K,L)$ are defined as the first variation of the usual Quermassintegrals\footnote{For a convex body $K$, $W_0(K),\ldots, W_{n+1}(K)$ notate the Quermassintegrals of $K$. In particular, $W_0(K)$ is volume of $K$, $nW_1(K)$ is the surface area of $K$ and $W_{n+1}(K)$ is the volume of the unit ball.} with respect to $L_p$-sum:
\[\frac{n+1-k}{p}W_{p,k}(K,L)=\lim_{\varepsilon\to 0^+}\frac{W_k(K+_p\varepsilon\cdot L)-W_k(K)}{\varepsilon}.\]
Aleksandrov, Fenchel and Jessen for $p=1$ and Lutwak \cite{Lut1} for $p>1$ have shown that for each $k=0,\ldots,n$, there exists a Borel measure $S_{p,k}(K,\cdot)$ on $\mathbb{S}^n$, $L_p$ surface area measure of order $k$, such that
\[W_{p,k}(K,L)=\frac{1}{n+1}\int_{\mathbb{S}^n}h_L^p(u)dS_{p,k}(K,u).\]
Moreover, $S_{p,k}(K,\cdot)$ is absolutely continuous with respect to $k$th surface area measure of $K$, $S_k(K,\cdot)$, and has the Radon-Nikodym derivative
\[\frac{dS_{p,k}(K,\cdot)}{dS_k(K,\cdot)}=h_K^{1-p}(\cdot).\]
In addition, if the boundary of $K$ is a $C^2$-smooth hypersurface with everywhere positive principal curvatures, then
\[dS_{p,k}(K,\cdot)=h_K^{1-p}(\cdot)\sigma_{n-k}(K,\cdot)dx.\]

If $p=1,$ a necessary condition for the existence of a solution to (\ref{soliton}) is that $\varphi$ must satisfy the vector equation
\begin{align}\label{closing}
\int_{\mathbb{S}^n}\frac{x}{\varphi(x)}dx=0.
\end{align}
Miraculously this condition suffices for the Minkowski problem; see, for example, \cite{Chengyau}. The $L_p$-Minkowski problem is also well-understood (except the case $p\leq -n-1$) and we refer the reader to the essential papers \cites{Lut1,Lut2,Lut3,CBC,chwang} for motivation and the most comprehensive list of results, see also \cite{Schneider}*{Chapter 9.2, Notes for Section 9.2}. An application of the existence of solutions to the $L_p$-Minkowski problem appears in Lutwak, Yang, Zhang \cite{LYZ}. If $p=1, k< n,$ much less is known and in addition to (\ref{closing}) further restrictions need to be imposed on $\varphi$. For example, let us consider the case when $\varphi$ is rotationally symmetric. A function $\varphi$ defined on the unit sphere is said to be rotationally symmetric if $\varphi(\theta)=\varphi(x_1,\ldots,x_{n+1})$ with $x_{n+1}=\sin(\theta)$ where $\theta\in [-\frac{\pi}{2},\frac{\pi}{2}]$. Note that $\theta$ is the angle that the vector from the origin to $(x_1,\ldots,x_{n+1})$ makes with $x_{n+1}=0.$ In \cite{Fi3}, Firey has found that in order for a continuous function $1/\varphi $ to be the $k$th elementary symmetric function of the principal radii of a $C^2$ smooth, closed strictly convex hypersurface of revolution, it is necessary and sufficient that in some coordinates on $\mathbb{S}^n$, $\varphi$ is a function of the latitude $\theta$ alone, and over $-\frac{\pi}{2}<\theta<\frac{\pi}{2}:$
\begin{description}
  \item[i] $\frac{1}{\varphi}$ is continuous and has finite limits as $\theta$ tends to $\pm\frac{\pi}{2}$,
  \item[ii] $\int_{\theta}^{\frac{\pi}{2}}\frac{\cos^{n-1}(\alpha)\sin(\alpha)}{\varphi(\alpha)}d\alpha>0$ and zero for $\theta=-\frac{\pi}{2},$
  \item[iii] $\frac{1}{\varphi(\theta)}>\frac{n-k}{\cos^n(\theta)}\int_{\theta}^{\frac{\pi}{2}}\frac{\cos^{n-1}(\alpha)\sin(\alpha)}{\varphi(\alpha)}d\alpha.$
\end{description}
Due to symmetry, the assumption for $\theta=-\frac{\pi}{2}$ in item (ii) is the same as the closure equation (\ref{closing}). The main consequence of item (iii) is that the principal radii of curvature are positive:
\begin{align*}
&\frac{1}{\varphi(\theta)}-\frac{n-k}{\cos^n(\theta)}\int_{\theta}^{\frac{\pi}{2}}\frac{\cos^{n-1}(\alpha)\sin(\alpha)}{\varphi(\alpha)}d\alpha\\
&=\binom{n-1}{k-1}(h''(\theta)+h(\theta))(h(\theta)-h'(\theta)\tan\theta)^{k-1}.
\end{align*}
In \cite{pog}, Pogorelov proved if $\varphi^{-1}-(\varphi^{-1})_{ss}> 0$ on every great circle parameterized by arc-length $s$, then $1/\varphi$ is the sum of the principal radii of curvature in Euclidean 3-space (this is not a necessary condition). The case $p=1,k=1$ without any dimensional restriction was eventually solved by Firey \cites{Fi1,Fi2} where he gave a necessary and sufficient condition, settling a hundred year old problem posed by Christoffel.\footnote{Firey also explains in \cite{Fi1}*{page 11} how Pogorelov's condition connects to his.} An application of Firey's \cite{Fi1} existence result to the study of surfaces of constant
width appears in Fillmore \cite{Fillmore}. The solution to Christoffel's problem was independently discovered by Berg \cite{Berg}. See also \cite{Schneider}*{Chapter 8.3.2} for the explicit construction of the solution to the Christoffel problem and \cite{GYY}*{Theorem 6.1} for the corresponding regularity properties and \cite{Schneider}*{Notes for Section 8.4}. In \cite{Guan-Ma}, Guan-Ma proved a deformation lemma which allowed them to establish if a function $\varphi\in C^2(\mathbb{S}^{n})$ is $k$-convex, e.q., $\bar{\nabla}_i\bar{\nabla}_j\varphi^{\frac{1}{k}}+\bar{g}_{ij}\varphi^{\frac{1}{k}}$ is non-negative definite, then the equation (\ref{soliton}) for $p=1,k<n$ has a strictly convex solution. Note that Guan-Ma's condition for $p=1,k=1$ is weaker than Pogorelov's condition. Later in \cite{HMS}, using the deformation lemma, Hu-Ma-Shen proved that if $p\geq k+1,k<n$ and $\varphi\in C^2(\mathbb{S}^{n})$ is $(p+k-1)$-convex, then (\ref{soliton}) admits a positive strictly convex solution.\footnote{The statement of Hu-Ma-Shen's theorem is erroneous and in item (i) it should be read ``if $f^{-\frac{1}{p+k-1}}$ is spherical convex" and in item (ii) it should be read ``if $f^{-\frac{1}{2k}}$ is spherical convex".} Recently, for $1<p<k+1$ and for even prescribed data, under the $(p+k-1)$-convexity of $\varphi,$ an existence result was proved by Guan and Xia in \cite{GuanXia} using a refined gradient estimate and the constant rank theorem.

Before we state our main theorems, we draw attention to an interesting feature of the flow (\ref{unf}); however, this property is not used in this paper. Suppose that for a positive, smooth rotationally symmetric $\varphi$ and a smooth, rotationally symmetric, strictly convex hypersurface $M_0$ with the support function $h_0$ we have
\begin{description}
  \item[ii] $\int_{\theta}^{\frac{\pi}{2}}\frac{\cos^{n-1}(\alpha)\sin(\alpha)h_0^{p-1}(\alpha)}{\varphi(\alpha)}d\alpha>0$ and zero for $\theta=-\frac{\pi}{2},$
  \item[iii] $\frac{h_0^{p-1}(\theta)}{\varphi(\theta)}>\frac{n-k}{\cos^n(\theta)}
      \int_{\theta}^{\frac{\pi}{2}}\frac{\cos^{n-1}(\alpha)\sin(\alpha)h_0^{p-1}(\alpha)}{\varphi(\alpha)}d\alpha.$
\end{description}
If we start the flow (\ref{unf}) from $M_0$, then for all $t>0,$ $M_t$ satisfies the previous two properties provided $p>1.$ To see this for the item (iii), note that
\begin{align*}
\frac{d}{dt}&\left(\frac{h^{p-1}(\theta,t)}{\varphi(\theta)}-\frac{n-k}{\cos^n(\theta)}
      \int_{\theta}^{\frac{\pi}{2}}\frac{\cos^{n-1}(\alpha)\sin(\alpha)h^{p-1}(\alpha,t)}{\varphi(\alpha)}d\alpha\right)=\\
      &(p-1)\left(\sigma_k(\theta,t)-\frac{n-k}{\cos^n(\theta)}
      \int_{\theta}^{\frac{\pi}{2}}\cos^{n-1}(\alpha)\sin(\alpha)\sigma_k(\alpha,t)d\alpha\right)>0.
\end{align*}
One can see similarly that item (ii) is preserved along the flow. For the case $\varphi\equiv1,k=n, p=-n-1$, preserving a property similar to (ii) played a role in the proofs of \cite{ivakifunc}.

In this paper, we prove the following theorems about the asymptotic behavior of the flow.
\begin{theorem}\label{main theorem1}
Suppose $p\geq k+1, k<n$ and $\varphi\in C^{\infty}(\mathbb{S}^n)$ is a positive function such that $\bar{\nabla}_i\bar{\nabla}_j\varphi^{\frac{1}{p+k-1}}+\bar{g}_{ij}\varphi^{\frac{1}{p+k-1}}$ is positive definite. Then there exists a unique smooth, closed strictly convex solution $\{M_t\}$ to (\ref{F-param}) such that $\{\tilde{M}_t\}$ converges in $C^{\infty}$ to a smooth, closed strictly convex solution hypersurface whose support function is positive and solves (\ref{soliton}).
\end{theorem}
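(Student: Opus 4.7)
The plan is to follow the standard curvature-flow recipe: short-time existence from parabolicity, uniform a priori estimates on the normalized flow (\ref{nf}), long-time existence, and extraction of a soliton limit via a monotone Lyapunov functional. Short-time existence and preservation of strict convexity on a small interval follow from the fact that (\ref{nf}) is strictly parabolic on the positive cone $\{r_{ij}>0\}$ once $h>0$.

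For the $C^0$ bounds, testing (\ref{nf}) at a spatial maximum of $h(\cdot,\tau)$ gives $r_{ij}\leq h\bar g_{ij}$, so $\sigma_k\leq h^k$ there; since $p\geq k+1$ the first term in (\ref{nf}) is $\leq \varphi h_{\max}^{k+2-p}$, and the conserved quantity $\int h^p/\varphi\, dx=\int 1/\varphi\, dx$ together with Jensen's inequality gives a lower bound on the normalizing factor $A(\tau)=\int h\sigma_k\,dx/\int 1/\varphi\,dx$ strong enough to conclude $h_{\max}\leq C$. A dual argument at the minimum yields $h_{\min}\geq c>0$.

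The main obstacle is the $C^2$ estimate, precisely because the author wishes to avoid the constant rank theorem/deformation lemma used in \cite{Guan-Ma,HMS}. The plan is to directly bound $u:=\varphi h^{1-p}\sigma_k$ from above and below by applying the maximum principle to an auxiliary function built out of $\log u$ and a suitable power of $h$, exploiting the assumption that $\bar\nabla_i\bar\nabla_j\varphi^{1/(p+k-1)}+\bar g_{ij}\varphi^{1/(p+k-1)}$ is positive definite; the exponent $p+k-1$ is exactly the scaling under which this spherical Hessian condition produces a definite sign in the resulting elliptic inequality. Two-sided bounds on $u$, combined with the $C^0$ bound on $h$, yield two-sided bounds on $\sigma_k$. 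A separate maximum-principle argument applied to $\log\lambda_{\max}+\beta h$ (using the bound on $\sigma_k$) controls $\lambda_{\max}$ from above; feeding this back into the positive lower bound on $\sigma_k$ forces the smallest principal radius to stay away from zero, so strict convexity is preserved. With uniform $C^2$ bounds and uniform parabolicity, Krylov--Safonov plus Schauder bootstrapping give uniform $C^\infty$ bounds, so the flow exists on $[0,\infty)$.

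For convergence, the key observation is that along (\ref{nf}),
\begin{equation*}
\frac{d}{d\tau}\int_{\mathbb{S}^n}h\sigma_k\,dx=(k+1)\left(\int_{\mathbb{S}^n}\varphi h^{2-p}\sigma_k^2\,dx-\frac{\left(\int_{\mathbb{S}^n}h\sigma_k\,dx\right)^{2}}{\int_{\mathbb{S}^n}\varphi^{-1}\,dx}\right)\geq 0,
\end{equation*}
by Cauchy--Schwarz applied to $\sqrt{h^p/\varphi}$ and $\sqrt{\varphi h^{2-p}}\,\sigma_k$ together with the conservation law, with equality at time $\tau$ iff $\varphi h^{1-p}\sigma_k$ is constant in $x$, i.e., iff $h(\cdot,\tau)$ solves (\ref{soliton}). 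The $C^0$ and $C^2$ bounds force $\int h\sigma_k\,dx$ to be bounded above, so this monotone functional converges and its derivative tends to zero along a sequence $\tau_j\to\infty$. Using the $C^\infty$ bounds to extract a smooth subsequential limit and the equality case to identify it as a soliton gives subsequential convergence; full $C^\infty$ convergence then follows from a standard interpolation/linearization argument at the limit soliton. The genuinely hard step, as emphasized in the abstract, is the $C^2$ estimate producing a lower bound on the principal radii using only $(p+k-1)$-convexity of $\varphi$ and without invoking the constant rank theorem.
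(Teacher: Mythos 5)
The decisive gap is in your $C^2$ step. For $k<n$ the implication you rely on --- an upper bound on $\lambda_{\max}$ together with $\sigma_k\geq c>0$ forces the smallest principal radius away from zero --- is false: already for $k=1$, $n=2$ the values $\lambda_1=0$, $\lambda_2=2c$ satisfy $\sigma_1\geq c$ and $\lambda_{\max}\leq L$. That inference is available only when $k=n$. Moreover, you spend the hypothesis on $\bar{\nabla}_i\bar{\nabla}_j\varphi^{\frac{1}{p+k-1}}+\bar{g}_{ij}\varphi^{\frac{1}{p+k-1}}$ in the wrong place: the two-sided bound on the speed $\varphi h^{1-p}\sigma_k$ needs no convexity of $\varphi$ at all (Lemma \ref{lower and upper speed bounds} obtains it from the sign structure of the reaction terms when $p\geq k+1$ plus control of $\eta$), whereas the step where the hypothesis is indispensable --- the uniform positive lower bound on the principal radii --- is exactly the one your outline never establishes, and the example in the last section shows it cannot be extracted from $\sigma_k$-bounds alone. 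The paper's Lemma \ref{lower and upper princpal bounds} proves it by applying the maximum principle to the largest eigenvalue of $r^{ij}/h$, using the inverse concavity inequality (\ref{inv conc}) for $\sigma_k^{\frac1k}$ and the identity rewriting the zero-order terms as $(k+1)\Theta^{\frac{k}{k+1}}\left(\Theta^{\frac{1}{k+1}}+(\Theta^{\frac{1}{k+1}})_{ss}\right)$ with $\Theta=\varphi h^{2-p}$; after expanding, the $h$-dependent terms combine into a square with favorable sign (here $p\geq 2$ is used), and what remains is controlled precisely by the spherical Hessian of $\varphi^{\frac{1}{p+k-1}}$. Without an argument of this type (or the constant rank theorem you are trying to avoid), strict convexity and uniform parabolicity are not secured, and everything downstream (Krylov--Safonov, Schauder, convergence) is unfounded.

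Two further points. Your $C^0$ argument is also incomplete: a lower bound on $\eta=\int_{\mathbb{S}^n}h\sigma_k\,dx/\int_{\mathbb{S}^n}\varphi^{-1}dx$ does not follow from the conservation law plus Jensen, since a long thin body can have $\int h^p/\varphi\,dx$ normalized while $\int h\sigma_k\,dx$ (a mixed volume containing $k+1\geq 2$ copies of the body) is arbitrarily small; the correct source is the monotonicity you only invoke later, which gives $\eta(\tau)\geq\eta(0)$ (Lemma \ref{monotonicity}, Remark \ref{rem}). Even then the maximum-principle argument at $h_{\max},h_{\min}$ closes only for $p>k+1$, the reaction exponent degenerating at $p=k+1$; this is why the paper proves the gradient estimate $|\bar{\nabla}\log h|\leq\gamma$ (Lemma \ref{grad estimate}, a parabolic version of Hu--Ma--Shen) and combines it with the conserved integral to get the $C^0$ bounds for all $p\geq k+1$ (Lemma \ref{key lemma}). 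Your monotonicity computation for $\int h\sigma_k\,dx$ and its equality case are correct and agree with Lemma \ref{monotonicity}; for upgrading subsequential to full convergence the paper uses uniqueness of solutions of (\ref{soliton}) (for $p=k+1$ up to dilation, fixed by the normalization), which is cleaner than an unspecified linearization argument.
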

Our proof of the convergence to solitons does not employ the deformation lemma (constant rank theorem) and thus provides a partial answer to the following question raised in \cite{GuanXia}: Is there a direct effective way to derive an estimate for $r_{ij}$ from below under the same convexity conditions
without using the constant rank theorem? Here our parabolic approach to the problem (\ref{soliton}) allows us to obtain a uniform positive lower bound on $r_{ij}$ along the normalized flow by using a very simple auxiliary function (see Lemma \ref{lower and upper princpal bounds} below) and hence we can avoid the constant rank theorem when the assumptions of Theorem \ref{main theorem1} are satisfied.

In the last section, for $1\leq k<n, p+k-1>0$ we show the existence of a rotationally symmetric $\varphi$ with $((\varphi^{\frac{1}{p+k-1}})_{\theta\theta}+\varphi^{\frac{1}{p+k-1}})\big|_{\theta=0}<0$ and a smooth, closed, strictly convex initial hypersurface for which the solution to the flow (\ref{F-param}) with $k<n$ will lose smoothness. Therefore $(p+k-1)$-convexity of $\varphi$ is essential to ensure the smoothness of the solution is preserved.

For $k=n, p\geq n+1$, we can improve \cite{BIS}*{Theorem 1} by dropping the evenness assumption and allowing general $\varphi$.
\begin{theorem}\label{main theorem1a}
Suppose $k=n, p\geq n+1$ and $\varphi\in C^{\infty}(\mathbb{S}^n)$ is a positive function. Then there exists a unique smooth, closed strictly convex solution $\{M_t\}$ to (\ref{F-param}) such that $\{\tilde{M}_t\}$ converges in $C^{\infty}$ to a smooth, closed strictly convex solution hypersurface whose support function is positive and solves (\ref{soliton}).
\end{theorem}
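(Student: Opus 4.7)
The plan is the standard parabolic approach to the $L_p$-Christoffel--Minkowski problem, adapted so as to drop the evenness hypothesis of \cite{BIS}*{Theorem 1}. Since $k=n$, the evolution (\ref{nf}) is a scalar parabolic Monge--Amp\`ere type equation for $h$, so short-time existence is standard and no constant-rank obstruction to $r_{ij}>0$ arises. The driving Lyapunov functional is (a multiple of) the enclosed volume, $\mathcal V(\tau):=\int_{\mathbb S^n} h\sigma_n\,dx$. Using the Minkowski-type first variation $\frac{d}{d\tau}\mathcal V=(n+1)\int \sigma_n\partial_\tau h\,dx$, the explicit form (\ref{nf}) with $k=n$, and the preserved identity $\int h^p/\varphi=\int 1/\varphi$, one computes
\[
\frac{d}{d\tau}\mathcal V=(n+1)\left(\int_{\mathbb S^n}\varphi h^{2-p}\sigma_n^{2}\,dx-\frac{\bigl(\int_{\mathbb S^n}h\sigma_n\,dx\bigr)^{2}}{\int_{\mathbb S^n}\frac{1}{\varphi}\,dx}\right)\geq 0,
\]
which is Cauchy--Schwarz applied to $\sqrt{\varphi h^{2-p}}\,\sigma_n$ and $\sqrt{h^{p}/\varphi}$, with equality iff $\varphi h^{1-p}\sigma_n$ is constant on $\mathbb S^n$, i.e., iff $h$ solves the soliton equation (\ref{soliton}).

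The a priori estimates then proceed in three layers. First, a uniform two-sided bound $0<c\leq h\leq C$: the preserved $L^p$-integral of $h$ combined with a suitable $L_p$-Brunn--Minkowski inequality bounds $h_{\max}$ from above, while the lower bound $h_{\min}\geq c$ is extracted from the monotonicity of $\mathcal V$ via a test-function argument that prevents the support function from degenerating in any direction. This $C^0$ step is the main obstacle: in \cite{BIS} evenness of $\varphi$ placed the centre of symmetry at the origin and made $h_{\max}/h_{\min}$ directly controllable, whereas without evenness one has to track the drift of the origin relative to the evolving body, and the scale-invariant critical case $p=n+1$ is the most delicate. Second, a uniform two-sided bound on the principal radii $r_{ij}$: the upper bound follows from the maximum principle applied to $\log\lambda_{\max}(r)+\Phi(h)$ for a suitably chosen $\Phi$, and the lower bound on $r_{ij}$ follows from bounding $\sigma_n$ from below via the $C^0$-bounds for $h$ and the evolution equation itself. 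Concavity of $\log\det$ makes these steps essentially algebraic and, crucially, avoids any appeal to the constant rank theorem. Third, with $r_{ij}$ and its inverse uniformly bounded, the flow is uniformly parabolic, so Krylov--Safonov, Evans--Krylov and Schauder bootstrap give uniform $C^{\infty}$ estimates in space and time.

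With these estimates in hand $\{\tilde M_\tau\}_{\tau\geq 0}$ is pre-compact in $C^{\infty}$. The monotonicity $\frac{d}{d\tau}\mathcal V\geq 0$, together with the upper bound on $\mathcal V$ furnished by the $C^0$ estimates, gives $\int_0^\infty \frac{d}{d\tau}\mathcal V\,d\tau<\infty$, hence $\frac{d}{d\tau}\mathcal V(\tau_j)\to 0$ along some sequence $\tau_j\to\infty$. The equality case of Cauchy--Schwarz in the monotonicity formula then forces every $C^{\infty}$ sub-sequential limit to solve (\ref{soliton}). Full (not merely subsequential) convergence is upgraded from uniqueness of solutions to the $L_p$-Minkowski problem in the range $p>n+1$, and, for the critical case $p=n+1$, by a \L{}ojasiewicz-type argument based on the second variation of $\mathcal V$ at the limit soliton, combined with standard parabolic interpolation to promote $C^0$ decay to $C^{\infty}$ decay.
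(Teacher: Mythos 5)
Your outline reproduces the paper's general architecture (monotone volume-type functional, $C^0\Rightarrow C^2\Rightarrow C^\infty$ estimates, subsequential convergence from the equality case, uniqueness to upgrade to full convergence), but it has a genuine gap exactly at the step you yourself flag as ``the main obstacle'': the uniform lower bound $h\geq c>0$ without evenness. Monotonicity of $\mathcal V(\tau)=\int_{\mathbb S^n}h\sigma_n\,dx$ (which, since $\int h^p/\varphi\,dx$ is preserved, is the paper's functional $\mathcal A^{\varphi}_{n,p}$ of Lemma \ref{monotonicity} up to a constant) cannot produce such a bound: the enclosed volume stays bounded below even while the origin drifts to the boundary of the evolving body and $h_{\min}\to 0$, which is precisely the degeneration that dropping the evenness assumption of \cite{BIS} permits, and the ``test-function argument'' you invoke is never specified and has no evident content. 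The mechanism the paper uses --- and explicitly calls the only new ingredient behind Theorem \ref{main theorem1a} --- is the gradient estimate $|\bar\nabla\log h(\cdot,\tau)|\leq\gamma$ of Lemma \ref{grad estimate}, proved for $p\geq k+1$ by a maximum-principle argument on $\rho^2-Ah^2$ (with the concavity of $\sigma_k^{1/k}$ and the speed bound of Lemma \ref{lower and upper speed bounds} handling the borderline case $p=k+1=n+1$). This bounds the oscillation of $\log h$, hence $h_{\max}/h_{\min}$; combined with the conservation of $\int_{\mathbb S^n}h^p/\varphi\,dx$ along (\ref{nf}), which pins $h_{\max}$ from below and $h_{\min}$ from above, it gives the two-sided $C^0$ bound of Lemma \ref{key lemma}, and then two-sided bounds on $\sigma_n$. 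Without this estimate (or a genuine substitute) your scheme does not close at the $C^0$ level, and this is exactly where the hypothesis $p\geq n+1$ is used.

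Two secondary points. For the $C^2$ layer the paper does not re-derive anything: it imports the curvature estimate of \cite{BIS}*{Lemma 8} (a Tso/Pogorelov-type bound, available for $k=n$ with no hypothesis on $\varphi$); your $\log\lambda_{\max}(r)+\Phi(h)$ sketch is in that spirit but would have to be carried out. And for the critical case $p=n+1$ no \L ojasiewicz argument is needed: uniqueness of solutions to (\ref{soliton}) up to dilation, together with the preserved normalization $\int_{\mathbb S^n}h^{n+1}/\varphi\,dx=\int_{\mathbb S^n}1/\varphi\,dx$, already singles out a unique limit, which is how the paper passes from subsequential to full convergence; your route is not wrong in principle, but it replaces a one-line observation by an unproved second-variation analysis. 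The remaining steps (Cauchy--Schwarz equality case forcing (\ref{soliton}), Krylov--Safonov and Schauder bootstrap) agree with the paper.
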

We should point out the only new ingredient required to prove this last theorem is the gradient estimate established in Lemma \ref{grad estimate}; this allows us to obtain uniform lower and upper bounds on the support function of the normalized solution even if the initial hypersurface is not origin-symmetric. In particular, the curvature estimate of \cite{BIS}*{Lemma 8} is crucial.

Finally in view of Chow-Gulliver's gradient estimate \cite{CHGU} for the case $p>2,\varphi\equiv 1$, we have the following result.
\begin{theorem}\label{main theorem2}
Suppose $p>2$ and $\varphi\equiv 1.$ Then there exists a unique smooth, closed strictly convex solution $\{M_t\}$ to (\ref{F-param}) such that $\{\tilde{M}_t\}$ converges in $C^{\infty}$ to the unit sphere.
\end{theorem}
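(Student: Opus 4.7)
My approach is to use the Chow--Gulliver gradient estimate as the only additional input needed to upgrade the proof of Theorem \ref{main theorem1} from $p\geq k+1$ to $p>2$, exploiting that the choice $\varphi\equiv 1$ trivially satisfies the spherical positive-definiteness hypothesis of that theorem: indeed, with $\varphi\equiv 1$ the tensor $\bar\nabla_i\bar\nabla_j\varphi^{1/(p+k-1)}+\bar g_{ij}\varphi^{1/(p+k-1)}$ reduces to $\bar g_{ij}$, which is positive definite. The plan is therefore to secure two-sided uniform bounds $0<C_1^{-1}\leq h(\cdot,\tau)\leq C_1$ on the support function along the normalized flow \eqref{nf}, after which all subsequent arguments of Theorem \ref{main theorem1} (lower bounds on the principal radii via the auxiliary function of Lemma \ref{lower and upper princpal bounds}, upper bound on $\sigma_k$ via a standard maximum principle on its evolution, uniform parabolicity, and Krylov--Safonov/Schauder bootstrapping) carry over verbatim.

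To secure these $C^0$ bounds, I would invoke the Chow--Gulliver gradient estimate \cite{CHGU}: in the range $p>2$ with $\varphi\equiv 1$, a maximum principle on a quantity like $|\bar\nabla h|^2/h^2$ yields a uniform pointwise bound $|\bar\nabla \log h|\leq C_0$ along \eqref{nf}. Since the normalization preserves $\int_{\mathbb{S}^n}h^p\,dx=\omega_n$, this oscillation estimate forces $\max h/\min h\leq e^{\pi C_0}$, and the conservation law pins down the absolute scale of $h$, giving the required uniform two-sided bound. With $h$ and the principal radii now controlled, the speed $h^{2-p}\sigma_k$ is a smooth, concave, uniformly parabolic operator on our solutions, so higher regularity and long-time existence of \eqref{nf} follow.

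For the convergence step, any $C^\infty$ subsequential limit of $\{\tilde M_\tau\}$ is a smooth, strictly convex soliton of \eqref{soliton}, which for $\varphi\equiv 1$ reduces to $h^{1-p}\sigma_k=c$; the unique positive closed solution up to homothety is a centered sphere, and the conserved integral $\int h^p\,dx=\omega_n$ forces radius $1$. To upgrade subsequential to full $C^\infty$ convergence, I would use monotonicity of an appropriate Quermassintegral-type functional along \eqref{nf} whose only critical point under the normalization is the unit sphere, combined with interpolation against the uniform $C^{k,\alpha}$ bounds. The \emph{main obstacle} is confirming that the Chow--Gulliver estimate, which was originally formulated for a closely related curvature flow, transfers cleanly to the $h^{2-p}\sigma_k$ setting with the right sign structure; if the statement cannot be quoted directly, one would reproduce their maximum-principle computation on \eqref{nf}, where the assumption $p>2$ should produce the decisive negative cross-term in the evolution of $|\bar\nabla h|^2/h^2$.
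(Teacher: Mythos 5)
Your opening steps match the paper: for $\varphi\equiv 1$ the convexity hypothesis of Theorem \ref{main theorem1} is vacuous, the Chow--Gulliver reflection estimate supplies the gradient bound, and combined with the conservation of $\int_{\mathbb{S}^n}\frac{h^p}{\varphi}\,dx$ it yields uniform two-sided bounds on $h(\cdot,\tau)$, hence on $\eta(\tau)$ by Remark \ref{rem}-(1). The genuine gap is the claim that the remaining arguments of Theorem \ref{main theorem1} ``carry over verbatim.'' They do not when $2<p<k+1$, which is precisely the new range covered by Theorem \ref{main theorem2} once $k\geq 2$: Lemmas \ref{lower and upper speed bounds} and \ref{key lemma} rest on the signs of the zeroth-order terms in (\ref{ev of speed divided by h}), namely $(1+k-p)\left(\varphi h^{1-p}\sigma_k\right)^2+(p-k-1)\eta\varphi h^{1-p}\sigma_k$, which are favorable only for $p\geq k+1$. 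Consequently your ``standard maximum principle on its evolution'' does not give an upper bound on $\sigma_k$ in that range (the quadratic reaction term then pushes the maximum up, not down), and you never address the lower bound on $\sigma_k$ at all; both bounds are needed before Lemma \ref{lower and upper princpal bounds} can be run, since its proof uses a positive lower bound on $\Theta\sigma_k$ to produce the good term $-c_1(r^{11}/h)^2$, and uses $\sigma_k\leq d$ to convert $r^{11}\leq L$ into an upper bound on the principal radii; the lower bound on $\sigma_k$ is also what gives uniform parabolicity for the Krylov--Safonov step.

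The paper closes exactly this gap with two lemmas designed to be free of any homogeneity restriction: Lemma \ref{lem lower speed bound} bounds $\sigma_k$ from below via the non-homogeneous auxiliary function $\log(\Theta\sigma_k)-A\frac{\rho^2}{2}$, and Lemma \ref{upper speed bound} bounds it from above via $\varphi h^{1-p}\sigma_k/(1-\varepsilon\frac{\rho^2}{2})$ together with the inverse-concavity inequality (\ref{inver concavity}); both need only the $C^0$, gradient and $\eta$ bounds you already have. With these in hand, Remark \ref{rem}-(2) (i.e.\ Lemma \ref{lower and upper princpal bounds}, which indeed requires only $p\geq 2$ and the trivially satisfied condition (\ref{assum})) gives the curvature bounds, and the rest of your outline --- Krylov--Safonov/Schauder bootstrapping, subsequential convergence via the monotonicity of $\mathcal{A}_{k,p}^{\varphi}$, and identification of the limit with the unit sphere using uniqueness and the conserved normalization --- agrees with the paper. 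So the proposal is salvageable, but only after replacing the quoted ``verbatim'' step by an argument for two-sided bounds on $\sigma_k$ that does not use $p\geq k+1$, such as the two auxiliary-function lemmas above.
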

To conclude this section, we draw attention to some earlier works on the flow (\ref{unf}).
For $p=2,\varphi\equiv 1$, the $C^1$ convergence was established by Chow-Tsai \cite{BT} and recently the $C^{\infty}$ convergence was proved by Gerhardt in \cite{Ger}. For $p>2,~\varphi\equiv 1$, the $C^1$ convergence follows from the work of Chow-Gulliver \cite{CHGU} (up-to showing convexity is preserved). For $p=-n-1,k=n,\varphi\equiv1$, the flow was studied in \cites{ivakitrans,ivakifunc} and for $p>-n-1,k=n, \varphi\not\equiv 1$ in \cite{BIS}.
\section*{Acknowledgment}
M.I. has been supported in part by a Jerrold E. Marsden Fellowship. I would like to thank the Fields Institute for providing an excellent research environment during the thematic program on Geometric Analysis.
\section{Regularity estimates}\label{reg est}
For convenience we put
\[\eta:=\frac{\int_{\mathbb{S}^n}h\sigma_kdx}{\int_{\mathbb{S}^n}\frac{1}{\varphi}dx},\quad \Theta:=\varphi h^{2-p},\quad \mathcal{L}:=\Theta \sigma_k^{ab}\bar{\nabla}_a\bar{\nabla}_b,\quad \rho:=\sqrt{h^2+|\bar{\nabla}h|^2}.\]
If $h\in C^{\infty}(\mathbb{S}^{n})$ determines a smooth, closed strictly convex hypersurface, we write $[h]$ for the associated hypersurface. For such a hypersurface define
\begin{align*}
\mathcal{A}_{k,p}^{\varphi}[h]:=\left\{
  \begin{array}{ll}
    \int_{\mathbb{S}^n}h\sigma_kdx\left(\int_{\mathbb{S}^n}\frac{h^p}{\varphi}dx\right)^{-\frac{k+1}{p}}, & \hbox{if $p\neq 0$;} \\
    \exp\left(-\frac{k+1}{\int_{\mathbb{S}^n}\frac{1}{\varphi}dx}\int_{\mathbb{S}^n}\frac{\log h}{\varphi}dx\right)\int_{\mathbb{S}^n}h\sigma_kdx, & \hbox{if $p=0$.}
  \end{array}
\right.
\end{align*}
The functionals $\mathcal{A}_{k,p}^{\varphi}$ are well-known and have appeared for example in \cite{BA1}.
\begin{lemma}\label{monotonicity}
$\mathcal{A}_{k,p}^{\varphi}[h(\cdot,\tau)]$ is non-decreasing.
\end{lemma}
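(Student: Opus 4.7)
The proof plan hinges on three ingredients: scale-invariance of $\mathcal{A}_{k,p}^{\varphi}$, a Minkowski-type integration by parts, and the Cauchy--Schwarz inequality. First, note that under the rescaling $h\mapsto \lambda h$ (with $\lambda>0$) the tensor $r_{ij}$ scales by $\lambda$, so $\sigma_k$ scales by $\lambda^k$; consequently $\int h\sigma_k\,dx$ scales by $\lambda^{k+1}$, while $\bigl(\int h^p/\varphi\,dx\bigr)^{-(k+1)/p}$ scales by $\lambda^{-(k+1)}$, leaving $\mathcal{A}_{k,p}^{\varphi}$ invariant (and the case $p=0$ is analogous, with the exponential prefactor scaling by $\lambda^{-(k+1)}$). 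Since the normalized flow (\ref{nf}) is obtained from (\ref{unf}) by rescaling each $M_t$ and reparametrizing time, it suffices to prove $\frac{d}{dt}\mathcal{A}_{k,p}^{\varphi}[h(\cdot,t)]\geq 0$ along (\ref{unf}).

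For the main computation, I would exploit the standard divergence-free property $\bar\nabla_i \sigma_k^{ij}=0$ on $\mathbb{S}^n$, which rests on the Codazzi symmetry of $r_{ij}$, together with the Euler relation $\sigma_k^{ij}r_{ij}=k\sigma_k$. Combining $\partial_t \sigma_k=\sigma_k^{ij}(\bar\nabla_i\bar\nabla_j\partial_t h+\bar g_{ij}\partial_t h)$ with two integrations by parts yields the Minkowski-type identity $\int_{\mathbb{S}^n} h\,\partial_t\sigma_k\,dx=k\int_{\mathbb{S}^n}\sigma_k\,\partial_t h\,dx$, so that
\[
\frac{d}{dt}\int_{\mathbb{S}^n} h\sigma_k\,dx=(k+1)\int_{\mathbb{S}^n}\sigma_k\,\partial_t h\,dx=(k+1)\int_{\mathbb{S}^n}\varphi h^{2-p}\sigma_k^2\,dx.
\]
In parallel, for $p\neq 0$ one has $\frac{d}{dt}\int_{\mathbb{S}^n} h^p/\varphi\,dx=p\int_{\mathbb{S}^n} h\sigma_k\,dx$.

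Assembling these two formulas and using the logarithmic derivative of $\mathcal{A}_{k,p}^{\varphi}$ gives
\[
\frac{d}{dt}\mathcal{A}_{k,p}^{\varphi}[h]=(k+1)\Bigl(\int h^p/\varphi\,dx\Bigr)^{-\frac{k+1}{p}}\Biggl[\int\varphi h^{2-p}\sigma_k^2\,dx-\frac{\bigl(\int h\sigma_k\,dx\bigr)^2}{\int h^p/\varphi\,dx}\Biggr],
\]
which is non-negative by Cauchy--Schwarz applied to the factorization $h\sigma_k=\sqrt{\varphi h^{2-p}\sigma_k^2}\cdot\sqrt{h^p/\varphi}$. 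The $p=0$ case runs the same way, with $\log h$ replacing $h^p/p$ and $\int 1/\varphi\,dx$ (which is manifestly invariant along (\ref{nf})) replacing $\int h^p/\varphi\,dx$ in the denominator. I do not anticipate a serious obstacle; the only care needed is in verifying the divergence-free property of $\sigma_k^{ij}$, a classical fact which guarantees the integration by parts is boundary-free and which underpins the entire argument.
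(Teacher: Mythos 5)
Your proof is correct and follows essentially the same route as the paper: the same Minkowski-type integration by parts (divergence theorem, using $\bar{\nabla}_i\sigma_k^{ij}=0$) leading to the identity $\frac{d}{d\tau}\mathcal{A}_{k,p}^{\varphi}\propto\int_{\mathbb{S}^n}\varphi h^{2-p}\sigma_k^2dx\int_{\mathbb{S}^n}\frac{h^p}{\varphi}dx-\left(\int_{\mathbb{S}^n}h\sigma_kdx\right)^2$, concluded by H\"older/Cauchy--Schwarz. Your only variation, reducing to the unnormalized flow via scale invariance rather than computing along (\ref{nf}) directly (where the $\eta$-terms cancel for the same reason), is cosmetic and equally valid.
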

\begin{proof}
We only consider the case $p\neq 0.$ Using the divergence theorem, we calculate
\begin{align*}
\frac{\frac{d}{d\tau}\mathcal{A}_{k,p}^{\varphi}[h(\cdot,\tau)]}{(k+1)\left(\int_{\mathbb{S}^n}\frac{h^p}{\varphi}dx\right)^{-\frac{k+1}{p}-1}}&=\int_{\mathbb{S}^n}\varphi h^{2-p}\sigma_k^2dx\int_{\mathbb{S}^n}\frac{h^p}{\varphi}dx-\left(\int_{\mathbb{S}^n}h\sigma_kdx\right)^2.
\end{align*}
Therefore by the H\"{o}lder inequality $\mathcal{A}_{k,p}^{\varphi}[h(\cdot,\tau)]$ is non-decreasing along the flow.
\end{proof}
\begin{lemma}\label{eta} Suppose $p\geq 2.$
$\eta(\tau)$ is uniformly bounded above and below.
\end{lemma}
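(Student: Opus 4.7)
Since $\int_{\mathbb{S}^n}\varphi^{-1}\,dx$ is a fixed positive constant depending only on $\varphi$, bounding $\eta(\tau)$ is equivalent to bounding the mixed-volume-type quantity $Q(\tau):=\int_{\mathbb{S}^n}h\sigma_k\,dx$ above and below. My plan is to extract the lower bound from the monotonicity lemma just proved, and to extract the upper bound from an $L^p\!\to\!L^\infty\!\to$ mixed-volume comparison chain driven by the conservation law.

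For the lower bound, the normalization of (\ref{nf}) makes $\int_{\mathbb{S}^n}h^p/\varphi\,dx\equiv\int_{\mathbb{S}^n}1/\varphi\,dx$ along the flow; substituting this into the definition of $\mathcal{A}_{k,p}^{\varphi}$ yields
\[
\mathcal{A}_{k,p}^{\varphi}[h(\cdot,\tau)]=Q(\tau)\Bigl(\int_{\mathbb{S}^n}\varphi^{-1}\,dx\Bigr)^{-(k+1)/p},
\]
so Lemma \ref{monotonicity} gives $Q(\tau)\geq Q(0)>0$ immediately.

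For the upper bound, I would first combine the conservation identity with the two-sided bound $0<c\leq\varphi\leq C$ to get $\int_{\mathbb{S}^n}h^p\,dx\leq C_1$ uniformly in $\tau$. Next, since $h(\cdot,\tau)>0$ is the support function of a convex body $K_\tau$ containing the origin, if $R(\tau):=\max_{\mathbb{S}^n}h(\cdot,\tau)$ is attained at $x_0\in\mathbb{S}^n$, then $K_\tau$ contains the segment from $0$ to $R(\tau)x_0$, which yields the pointwise inequality $h(x,\tau)\geq R(\tau)\langle x,x_0\rangle_+$. Raising to the $p$-th power (using $p\geq 2>0$) and integrating gives
\[
R(\tau)^p\int_{\mathbb{S}^n}\langle x,x_0\rangle_+^p\,dx\leq\int_{\mathbb{S}^n}h^p\,dx\leq C_1,
\]
so $R(\tau)\leq C_2$ uniformly. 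Finally, $K_\tau\subseteq C_2\cdot B^{n+1}$, and the monotonicity of mixed volumes with respect to set inclusion yields $Q(\tau)=(n+1)V(K_\tau[k+1],B[n-k])\leq(n+1)C_2^{k+1}V(B[n+1])$, the desired upper bound.

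The hard part is the $L^p\!\to\!L^\infty$ passage in the second step. Its success rests on the pointwise comparison $h(\cdot,\tau)\geq R(\tau)\langle\cdot,x_0\rangle_+$, which needs the origin to stay in the interior of $K_\tau$ throughout the flow; one has to know $h>0$ is preserved, which should follow from the short-time theory and the expanding nature of (\ref{unf}) together with the current boundedness bootstrap. Once $\max h$ is under control, the inclusion $K_\tau\subseteq R\cdot B^{n+1}$ and the monotonicity of quermassintegrals close the argument with essentially no further work.
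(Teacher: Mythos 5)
Your proof is correct, and while the lower bound is exactly the paper's argument (monotonicity of $\mathcal{A}_{k,p}^{\varphi}$ combined with the conservation of $\int_{\mathbb{S}^n}h^p/\varphi\,dx$), your upper bound takes a genuinely different and more elementary route. The paper bounds $\int_{\mathbb{S}^n}h\sigma_k\,dx$ by invoking Gerhardt's convergence result for the auxiliary flow $\partial_t h=\sigma_k$ to get the sharp functional inequality $\mathcal{A}_{k,2}^{1}[h]\leq \mathcal{A}_{k,2}^{1}[1]$, and then uses H\"older's inequality (this is where $p\geq 2$ enters) together with the conservation law. You instead extract a uniform bound on $h_{\max}$ directly from the conserved quantity via the convexity barrier $h(\cdot,\tau)\geq h_{\max}\langle\cdot,x_0\rangle_+$ (which is legitimate: the farthest point of the body does lie at distance $h_{\max}$ in the direction $x_0$ where the maximum is attained, so the segment is contained in the body), and then conclude by monotonicity of mixed volumes, $\int_{\mathbb{S}^n}h\sigma_k\,dx\leq \omega_n h_{\max}^{k+1}$ --- which is precisely the inequality the paper records in Remark \ref{rem}-(1). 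Your route avoids the external input from \cite{Ger} entirely and in fact needs only $p>0$, not $p\geq 2$; note the paper itself observes in Remark \ref{rem}-(3) that \cite{Ger} can be avoided, but only for $p>k+1$ and by a different maximum-principle argument, so your version is both simpler and more general in that respect. The caveat you raise about positivity of $h$ is not a real gap: in the context of the lemma, $h(\cdot,\tau)$ is by definition the support function of a smooth, closed, strictly convex hypersurface enclosing the origin for as long as the (normalized) solution exists, so the pointwise comparison and the conservation law are available without any bootstrap; as a bonus, your intermediate bound on $h_{\max}$ is obtained without the gradient estimate that the paper only establishes later (Lemma \ref{grad estimate}), though it does not by itself give the lower bound on $h$ needed in Lemma \ref{key lemma}.
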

\begin{proof}
The uniform lower bound on $\eta(\tau)$ follows from Lemma \ref{monotonicity}.
To prove that $\eta$ is uniformly bounded above we proceed as follows. Along the flow $\partial_th=\sigma_k$, $\mathcal{A}_{k,2}^{1}[h(\cdot,t)]$ is monotone. Thus by the result of \cite{Ger}, for any  smooth, closed strictly convex hypersurface with support function $h$ we have
$\mathcal{A}_{k,2}^{1}[h]\leq \mathcal{A}_{k,2}^{1}[1].$
Now observe that for $p\geq 2,$ the inequality
\[\left(\int_{\mathbb{S}^n}h^2dx\right)^{-\frac{k+1}{2}}\geq \left(\left(\int_{\mathbb{S}^n}h^pdx\right)^{\frac{2}{p}}\omega_n^{1-\frac{2}{p}}\right)^{-\frac{k+1}{2}}\]
gives
\[\int_{\mathbb{S}^n}h\sigma_kdx\left(\left(\int_{\mathbb{S}^n}h^pdx\right)^{\frac{2}{p}}\omega_n^{1-\frac{2}{p}}\right)^{-\frac{k+1}{2}}\leq \mathcal{A}_{k,2}^{1}[1].\]
So $\mathcal{A}_{k,2}^{\varphi}$ is bounded above. The proof of the lemma is done.\footnote{In fact, only knowing the asymptotic behavior of the flow $\partial_th=\sigma_1$ is sufficient here; using quermassintegral inequalities we can control $\int_{\mathbb{S}^n}h\sigma_kdx$ from above by $(\int_{\mathbb{S}^n}h\sigma_1dx)^{\frac{k+1}{2}}$.}
\end{proof}
In the next lemma $r^{ij}$ signifies the entries of the inverse matrix of $[r_{ij}].$
\begin{lemma}\label{ev equ}
The following evolution equation holds along the flow (\ref{nf}).
\begin{align*}
(\partial_{\tau}-\mathcal{L})r_{ij}=&\Theta \sigma_k^{ab,mn}\bar{\nabla}_ir_{ab}\bar{\nabla}_jr_{mn}+(k+1)\Theta\sigma_k\bar{g}_{ij}\\
&-\Theta\sigma_k^{ab}\bar{g}_{ab}r_{ij}
+\Theta(\sigma_k^{ai}r_{aj}-\sigma_k^{aj}r_{ai})\\
&+\bar{\nabla}_i\Theta\bar{\nabla}_j\sigma_k+\bar{\nabla}_j\Theta\bar{\nabla}_i\sigma_k
+\sigma_k\bar{\nabla}_i\bar{\nabla}_j\Theta-\eta r_{ij},\\
\left(\partial_\tau-\mathcal{L}\right) r^{ij}=&-(k+1)\Theta\sigma_kr^{ip}r^{jp}+\Theta\sigma_k^{ab}\bar{g}_{ab}r^{ij}\\
&-\Theta r^{il}r^{js}(2\sigma_k^{am}r^{nb}+\sigma_k^{ab,mn})\bar{\nabla}_lr_{ab}\bar{\nabla}_sr_{mn}\\
&-\Theta r^{ip}r^{jq}(\sigma_k^{ap}r_{aq}-\sigma_k^{aq}r_{ap})\\
&-r^{ia}r^{jb}(\bar{\nabla}_a\Theta\bar{\nabla}_b\sigma_k+\bar{\nabla}_b\Theta\bar{\nabla}_a\sigma_k+\sigma_k\bar{\nabla}_a\bar{\nabla}_b\Theta)+\eta r^{ij},\\
\left(\partial_\tau-\mathcal{L}\right)h=&(1-k)\Theta \sigma_k+\Theta h\sigma_k^{ij}\bar{g}_{ij}-\eta h,\\
\left(\partial_\tau-\mathcal{L}\right) (\varphi h^{2-p}\sigma_k)=&(2-p)\varphi^2h^{3-2p}\sigma_k^2+\varphi^2 h^{4-2p}\sigma_k\sigma_k^{ij}\bar{g}_{ij}-(2-p+k)\eta\varphi h^{2-p}\sigma_k,\\
\left(\partial_\tau-\mathcal{L}\right)\frac{\rho^2}{2}=&(k+1)h\Theta\sigma_k-\eta \rho^2+\sigma_k\bar{g}^{ij}\bar{\nabla}_ih\bar{\nabla}_j\Theta-\Theta\sigma_k^{ab}r_a^mr_{mb}.
\end{align*}
\end{lemma}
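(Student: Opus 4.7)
The plan is to derive each of the five evolution identities by direct calculation: substitute the normalized flow $\partial_\tau h = \Theta\sigma_k - \eta h$ from (\ref{nf}), use the chain rule for the compositions $\sigma_k = \sigma_k(r_{ij})$ and $\Theta = \varphi h^{2-p}$, and commute covariant derivatives using the Ricci identity on $\mathbb{S}^n$ (curvature $\bar{R}_{ijkl} = \bar{g}_{ik}\bar{g}_{jl} - \bar{g}_{il}\bar{g}_{jk}$). No further ingredients are needed.

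The keystone is the evolution of $r_{ij}$. Differentiating $r_{ij} = \bar{\nabla}_i\bar{\nabla}_j h + \bar{g}_{ij}h$ in time and substituting the flow gives $\partial_\tau r_{ij} = \bar{\nabla}_i\bar{\nabla}_j(\Theta\sigma_k) + \bar{g}_{ij}\Theta\sigma_k - \eta r_{ij}$, since the $-\eta h$ contributions repackage as $-\eta r_{ij}$. Expanding the first term by the product rule produces $\sigma_k\bar{\nabla}_i\bar{\nabla}_j\Theta$, the two cross terms in $\bar{\nabla}\Theta\cdot\bar{\nabla}\sigma_k$, the concavity piece $\Theta\sigma_k^{ab,mn}\bar{\nabla}_i r_{ab}\bar{\nabla}_j r_{mn}$, and $\Theta\sigma_k^{ab}\bar{\nabla}_i\bar{\nabla}_j r_{ab}$. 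To rewrite the last term as $\mathcal{L}r_{ij}$ plus lower-order corrections, I would commute the two pairs of covariant derivatives via the spherical Ricci identity applied to the covector $\bar{\nabla}_k h$; after contracting with $\sigma_k^{ab}$ and invoking Euler's identity $\sigma_k^{ab}r_{ab} = k\sigma_k$, the correction yields exactly $k\Theta\sigma_k\bar{g}_{ij} - \Theta\sigma_k^{ab}\bar{g}_{ab}r_{ij} + \Theta(\sigma_k^{ai}r_{aj} - \sigma_k^{aj}r_{ai})$. Combining with the stray $\bar{g}_{ij}\Theta\sigma_k$ produces the stated coefficient $(k+1)\Theta\sigma_k\bar{g}_{ij}$.

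The formula for $r^{ij}$ then falls out by differentiating $r^{ia}r_{ab} = \delta^i_b$: the time derivative gives $\partial_\tau r^{ij} = -r^{ia}r^{jb}\partial_\tau r_{ab}$, and the spatial chain rule gives $\bar{\nabla}_a\bar{\nabla}_b r^{ij} = -r^{ip}r^{jq}\bar{\nabla}_a\bar{\nabla}_b r_{pq} + \bigl(r^{im}r^{pn}r^{jq} + r^{ip}r^{jm}r^{qn}\bigr)\bar{\nabla}_a r_{mn}\bar{\nabla}_b r_{pq}$. Contracting with $\Theta\sigma_k^{ab}$, substituting the $r_{ab}$-identity, and using $r^{ia}r^{jb}r_{ab} = r^{ij}$ together with $r^{ia}r^{jb}\bar{g}_{ab} = r^{ip}r^{jp}$ collapses everything to the stated expression. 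For $h$ itself, $\mathcal{L}h = \Theta\sigma_k^{ab}(r_{ab} - \bar{g}_{ab}h) = k\Theta\sigma_k - h\Theta\sigma_k^{ab}\bar{g}_{ab}$ delivers the claim at once. For $\Theta\sigma_k$, I would combine $\partial_\tau\Theta = (2-p)(\Theta/h)\partial_\tau h$ with $\partial_\tau\sigma_k = \sigma_k^{ab}\partial_\tau r_{ab}$, plug in the evolutions of $h$ and $r_{ab}$, and watch the $\eta$-pieces bundle via $\sigma_k^{ab}\bar{\nabla}_a\bar{\nabla}_b h = k\sigma_k - h\sigma_k^{ab}\bar{g}_{ab}$ into the single term $-(2-p+k)\eta\Theta\sigma_k$. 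For $\rho^2/2$, the key observation is the clean identity $\bar{\nabla}_b(\rho^2/2) = r_{bj}\bar{\nabla}^j h$ (immediate from $\bar{\nabla}_b\bar{\nabla}_j h = r_{bj} - \bar{g}_{bj}h$), from which $\bar{\nabla}_a\bar{\nabla}_b(\rho^2/2) = r_a^m r_{mb} - hr_{ab} + \bar{\nabla}^j h\,\bar{\nabla}_a r_{bj}$; contracting with $\Theta\sigma_k^{ab}$ and subtracting from $\partial_\tau(\rho^2/2) = h\Theta\sigma_k - \eta\rho^2 + \bar{\nabla}^j h\bar{\nabla}_j(\Theta\sigma_k)$ assembles the final identity.

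The only genuinely technical step is the third-order spherical Ricci commutation that generates the lower-order terms in the $r_{ij}$-evolution, which is the main source of bookkeeping error in an otherwise routine calculation. Everything else is product rule, chain rule, and Euler's identity for $\sigma_k^{ab}$.
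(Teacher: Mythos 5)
Your proposal is correct and follows essentially the same route as the paper: the paper simply outsources the $r_{ij}$ and $r^{ij}$ identities to the direct computations of Chow--Tsai and Urbas, treats $h$ and $\varphi h^{2-p}\sigma_k$ as straightforward, and expands $\rho^2/2$ by hand, which is exactly the calculation you outline (your use of $\bar{\nabla}_b\frac{\rho^2}{2}=r_{bj}\bar{\nabla}^jh$ is a mild streamlining of the paper's expansion). One small bookkeeping remark: besides the Ricci identity applied to $\bar{\nabla}h$ (which yields the total, Codazzi-type symmetry of $\bar{\nabla}_cr_{ab}$), you also need the commutation rule for the $2$-tensor $r_{ab}$ itself to produce the terms $k\Theta\sigma_k\bar{g}_{ij}-\Theta\sigma_k^{ab}\bar{g}_{ab}r_{ij}+\Theta(\sigma_k^{ai}r_{aj}-\sigma_k^{aj}r_{ai})$, and the same Codazzi symmetry is what lets the two quadratic gradient terms in the $r^{ij}$ equation combine into $2\sigma_k^{am}r^{nb}$; with that understood, all five identities check out.
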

\begin{proof}
For the computation of the evolution equations of $r_{ij}$ and $r^{ij}$ see \cites{BT,JU}. Deriving the evolution equations of $h,\varphi h^{2-p}\sigma_k$ is straightforward.
For $\rho^2/2$ we calculate
\begin{align*}
\left(\partial_\tau-\mathcal{L}\right)\frac{\rho^2}{2}
=&h\partial_\tau h+\bar{g}^{ij}\bar{\nabla}_ih\bar{\nabla}_j\partial_\tau h\\
&-\Theta\sigma_k^{ab}\left(h\bar{\nabla}_a\bar{\nabla}_bh+\bar{\nabla}_ah\bar{\nabla}_bh+\bar{\nabla}_mh\bar{\nabla}_a\bar{\nabla}_b\bar{\nabla}_mh+
\bar{\nabla}_a\bar{\nabla}_mh\bar{\nabla}_b\bar{\nabla}_mh
\right)\\
=&h\partial_\tau h-\eta|\bar{\nabla}h|^2+\sigma_k\bar{g}^{ij}\bar{\nabla}_ih\bar{\nabla}_j\Theta
+\Theta\bar{g}^{ij}\bar{\nabla}_ih\bar{\nabla}_j\sigma_k\\
&-\Theta\sigma_k^{ab}\left(\bar{\nabla}_ah\bar{\nabla}_bh+\bar{\nabla}_mh\bar{\nabla}_a(r_{bm}-\bar{g}_{bm}h)
\right)\\
&-\Theta\sigma_k^{ab}(r_{am}-\bar{g}_{am}h)(r_{bm}-\bar{g}_{bm}h)\\
&-\Theta\sigma_k^{ab}h(r_{ab}-\bar{g}_{ab}h)\\
=&h\partial_\tau h-\eta|\bar{\nabla}h|^2+\sigma_k\bar{g}^{ij}\bar{\nabla}_ih\bar{\nabla}_j\Theta+kh\Theta\sigma_k-\Theta\sigma_k^{ab}r_a^mr_{mb}\\
=&(k+1)h\Theta\sigma_k-\eta \rho^2+\sigma_k\bar{g}^{ij}\bar{\nabla}_ih\bar{\nabla}_j\Theta-\Theta\sigma_k^{ab}r_a^mr_{mb}.
\end{align*}
\end{proof}
\begin{lemma}\label{lower and upper speed bounds}
Suppose $p\geq k+1.$ Then
$\varphi h^{1-p}\sigma_k(\cdot,\tau)$ remains uniformly bounded above and below.
\end{lemma}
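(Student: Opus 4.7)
\emph{Plan.} Set $\psi:=\varphi h^{1-p}\sigma_k=(\Theta\sigma_k)/h$; this is the natural ``normalized speed'' and the quantity whose boundedness is asserted. My strategy is to derive a clean evolution identity for $\psi$ at interior spatial extrema and then apply the parabolic maximum principle together with the two-sided bounds on $\eta$ furnished by Lemma~\ref{eta}.

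Writing $u:=\Theta\sigma_k=\varphi h^{2-p}\sigma_k$ and applying the product/quotient rules to $\mathcal{L}(u/h)$, one finds
\begin{align*}
(\partial_\tau-\mathcal{L})\psi=\frac{(\partial_\tau-\mathcal{L})u}{h}-\frac{\psi\,(\partial_\tau-\mathcal{L})h}{h}+\frac{2\,\Theta\sigma_k^{ab}}{h}\,\bar\nabla_a\psi\,\bar\nabla_b h,
\end{align*}
so at a spatial critical point of $\psi$ the gradient term drops out. Substituting the evolution equations for $u$ and $h$ from Lemma~\ref{ev equ} and using $u=\psi h$, the terms involving the contracted Newton tensor $\Theta\sigma_k^{ij}\bar g_{ij}$ cancel between the two pieces and the $\eta$--contributions combine with exponent $(2-p+k)-1=k+1-p$, giving the clean identity
\begin{align*}
(\partial_\tau-\mathcal{L})\psi=(k+1-p)\,\psi\,(\psi-\eta)\qquad\text{at any spatial critical point of }\psi.
\end{align*}

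With this identity the rest is a textbook parabolic maximum principle. Strict convexity makes $\sigma_k^{ab}$ positive definite, so $\mathcal{L}\psi\le 0$ at a spatial maximum and $\mathcal{L}\psi\ge 0$ at a spatial minimum. Because $p\ge k+1$, the prefactor $k+1-p$ is non-positive; hence at a max $\partial_\tau\psi_{\max}\le 0$ whenever $\psi_{\max}\ge\eta$, and at a min $\partial_\tau\psi_{\min}\ge 0$ whenever $\psi_{\min}\le\eta$. Since $p\ge k+1\ge 2$, Lemma~\ref{eta} provides constants $0<c\le\eta(\tau)\le C$, so an ODE comparison yields
\begin{align*}
\min\bigl(\min_{\mathbb{S}^n}\psi(\cdot,0),\,c\bigr)\le\psi(\cdot,\tau)\le\max\bigl(\max_{\mathbb{S}^n}\psi(\cdot,0),\,C\bigr),
\end{align*}
which is the desired two-sided bound on $\varphi h^{1-p}\sigma_k$.

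The only delicate step is verifying the algebraic collapse in the second paragraph: the $\Theta\sigma_k^{ij}\bar g_{ij}$--terms coming from $(\partial_\tau-\mathcal{L})u$ and from $\psi\,(\partial_\tau-\mathcal{L})h$ must coincide after using $u=\psi h$, and the two $\eta$--coefficients must combine with the advertised sign. Once this routine but careful matching is done, the hypothesis $p\ge k+1$ is seen to be precisely what forces the right sign on both ends of the maximum principle, and no auxiliary information on $r_{ij}$ beyond strict convexity is needed at this stage.
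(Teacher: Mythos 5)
Your proposal is correct and is essentially the paper's own argument: the paper derives exactly the evolution equation $(\partial_\tau-\mathcal{L})\psi = 2\varphi h^{1-p}\sigma_k^{ij}\bar\nabla_i\psi\,\bar\nabla_j h+(1+k-p)\psi^2+(p-k-1)\eta\psi$, i.e.\ your identity $(k+1-p)\psi(\psi-\eta)$ at critical points, and then concludes by the maximum principle using the two-sided control of $\eta$ from Lemma \ref{eta} (applicable since $p\ge k+1\ge 2$). Your explicit ODE-comparison bounds and the sign discussion for $p\ge k+1$ just spell out the paper's closing sentence.
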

\begin{proof}
By Lemma \ref{ev equ} we have
\begin{align}\label{ev of speed divided by h}
\partial_{\tau} \left(\varphi h^{1-p}\sigma_k\right)%=-(\frac{\varphi h^{2-p}\sigma_k}{h})^2+(2-p)\varphi^2h^{2-2p}\sigma_k^2+\varphi^2 h^{3-2p}\sigma_k\sigma_k^{ij}g_{ij}\nonumber\\
%&+\frac{\varphi h^{2-p}\sigma_k^{ij}}{h}\bar{\nabla}_i\bar{\nabla}_j\left(\frac{\varphi h^{2-p}\sigma_k}{h}h\right)+(p-k-1)\eta\frac{\varphi h^{2-p}\sigma_k}{h}\nonumber\\
%=&-(\frac{\varphi h^{2-p}\sigma_k}{h})^2+(2-p)\varphi^2h^{2-2p}\sigma_k^2+\varphi^2 h^{3-2p}\sigma_k\sigma_k^{ij}g_{ij}\nonumber\\
%&+\varphi h^{2-p}\sigma_k^{ij}\bar{\nabla}_i\bar{\nabla}_j\left(\frac{\varphi h^{2-p}\sigma_k}{h}\right)+2\frac{\varphi h^{2-p}\sigma_k^{ij}}{h}\bar{\nabla}_i\frac{\varphi h^{2-p}\sigma_k}{h}\bar{\nabla}_jh\nonumber\\
%&+k\varphi^2h^{2-2p}\sigma_k^2-\varphi^2 h^{3-2p}\sigma_k\sigma_k^{ij}g_{ij}+(p-k-1)\eta\frac{\varphi h^{2-p}\sigma_k}{h}\nonumber\\
=&\varphi h^{2-p}\sigma_k^{ij}\bar{\nabla}_i\bar{\nabla}_j\left(\varphi h^{1-p}\sigma_k\right)+2\varphi h^{1-p}\sigma_k^{ij}\bar{\nabla}_i(\varphi h^{1-p}\sigma_k)\bar{\nabla}_jh\nonumber\\
&+(1+k-p)\left(\varphi h^{1-p}\sigma_k\right)^2+(p-k-1)\eta\varphi h^{1-p}\sigma_k.
\end{align}
Since we have control over $\eta$, the claim follows from the maximum principle.
\end{proof}
In the next lemma, using the concavity of $\sigma_k^{\frac{1}{k}}$ we obtain a gradient estimate for $\log h(\cdot,\tau)$ provided $p\geq k+1.$ In general, due to the examples in \cite{GuanXia}, such an estimate does not exist for $1<p<k+1$.
\begin{lemma}\label{grad estimate}
Suppose $p\geq k+1$. There exists a positive constant $\gamma$ depending only on the initial hypersurface and $\varphi$ such that
$|\bar{\nabla}\log h(\cdot,\tau)|\le \gamma.$
\end{lemma}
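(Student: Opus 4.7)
The plan is to apply the parabolic maximum principle to the auxiliary function $W:=\rho^2/h^2=1+|\bar{\nabla}\log h|^2$; a uniform upper bound on $W$ then gives the desired estimate with $\gamma=\sqrt{\sup W-1}$. I would first compute $(\partial_\tau-\mathcal{L})W$ by writing $W=A/B$ with $A=\rho^2/2,\ B=h^2/2$ and invoking the quotient rule for the operator $\partial_\tau-\mathcal{L}$; at any interior spatial maximum this reduces to
\[(\partial_\tau-\mathcal{L})W=\frac{2}{h^2}\Bigl[(\partial_\tau-\mathcal{L})A-W(\partial_\tau-\mathcal{L})B\Bigr].\]
The evolution of $A$ is supplied in Lemma \ref{ev equ}, while the evolution of $B$ is obtained from the one for $h$ together with $\mathcal{L}(h^2/2)=h\mathcal{L}h+\Theta\sigma_k^{ab}\bar{\nabla}_ah\bar{\nabla}_bh$. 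The $\eta$ contributions cancel exactly because $\eta\rho^2=W\eta h^2$.

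A key simplification comes from the identity $\bar{\nabla}_a(\rho^2/2)=r_{ab}\bar{\nabla}^bh$, obtained by substituting $\bar{\nabla}_a\bar{\nabla}_bh=r_{ab}-\bar{g}_{ab}h$. At the maximum of $W$ the critical-point condition $\bar{\nabla}W=0$ therefore becomes $r_{ab}\bar{\nabla}^bh=Wh\bar{\nabla}_ah$, so $\bar{\nabla}h$ is an eigenvector of the positive-definite Hessian matrix $[r_{ab}]$ with eigenvalue $Wh$. In the eigenframe of $r$ in which $\bar{\nabla}h$ is the first basis vector, I would then expand $\sigma_k^{ab}\bar{\nabla}_ah\bar{\nabla}_bh=\sigma_k^{11}|\bar{\nabla}h|^2$ and use concavity of $\sigma_k^{1/k}$ together with Newton-Maclaurin inequalities to compare the favorable term $-\Theta\sigma_k^{ab}r_a^mr_{mb}$ with the bad-sign terms produced by the quotient rule.

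The hypothesis $p\ge k+1$ enters through the expansion
\[\sigma_k\bar{g}^{ij}\bar{\nabla}_ih\bar{\nabla}_j\Theta=\sigma_kh^{2-p}\langle\bar{\nabla}h,\bar{\nabla}\varphi\rangle+(2-p)\varphi h^{1-p}\sigma_k|\bar{\nabla}h|^2,\]
whose leading piece has favorable sign since $(2-p)\le 1-k\le 0$, while the cross term is controlled using the uniform bound on $\varphi h^{1-p}\sigma_k$ from Lemma \ref{lower and upper speed bounds} together with the smoothness of $\varphi$. The remaining scalar terms $(k+1)h\Theta\sigma_k$ and $W(k-1)h\Theta\sigma_k$ both carry the bounded factor $\varphi h^{1-p}\sigma_k$, and $\eta$ is controlled by Lemma \ref{eta}. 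Collecting these estimates, I expect an inequality of the shape $(\partial_\tau-\mathcal{L})W\big|_{\max}\le C_1-C_2 W(W-1)$ for constants depending only on $\varphi$ and the initial datum, which by the parabolic maximum principle yields a uniform bound on $W$.

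The main obstacle I anticipate is balancing the concavity-based lower bound on $\Theta\sigma_k^{ab}r_a^mr_{mb}$ against the two positive $W$-weighted contributions: the quotient-rule term $W\Theta\sigma_k^{11}|\bar{\nabla}h|^2$ and $W(k-1)h\Theta\sigma_k$. The eigenvector identity $r_{ab}\bar{\nabla}^bh=Wh\bar{\nabla}_ah$ at the maximum is what makes these terms directly comparable, because it identifies $Wh$ as one of the principal radii in the $\bar{\nabla}h$-direction and ties $|\bar{\nabla}h|^2=(W-1)h^2$; without this rigidity the pointwise estimate would have too many free parameters to close.
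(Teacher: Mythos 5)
Your proposal is correct and is essentially the paper's own argument: the paper runs the maximum principle on $\rho^2-Ah^2$ (equivalently your $W=\rho^2/h^2$), uses the vanishing gradient at the touching point to single out $a_{11}=1+v_1^2$, which is exactly your eigenvector identity since $r_{ij}=h\,a_{ij}$ with $a_{ij}=v_{ij}+v_iv_j+\delta_{ij}$ and $v=\log h$, closes via the $(1+k-p)(W-1)$ term when $p>k+1$, and for $p=k+1$ uses precisely your remaining ingredients, namely the upper speed bound of Lemma \ref{lower and upper speed bounds} and the concavity inequality $\sum_l\sigma_k^{ll}\ge k\sigma_k^{\frac{k-1}{k}}$. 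The only quantitative slip is the anticipated shape $C_1-C_2W(W-1)$: the quotient-rule term $W\Theta\sigma_k^{11}|\bar{\nabla}h|^2$ cancels the quadratic gain from $-\Theta\sigma_k^{ab}r_a^mr_{mb}$ down to $-\Theta\sigma_k^{11}Wh^2$, whose relative size $Wh\sigma_k^{11}/\sigma_k$ is bounded by $n/k$ via Euler's relation, so the closing inequality is only linear in $W$ --- but the linear terms you identified are exactly what the paper uses, and the argument still closes.
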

\begin{proof}
The proof is a parabolic version of the estimates in \cite{HMS}*{Lemma 2}. Using Lemma \ref{ev equ}
we deduce
\begin{align*}
(\partial_{\tau}-\mathcal{L})(\rho^2-Ah^2)=&2(k+1)h\Theta\sigma_k-2\eta\rho^2+2\sigma_k\bar{g}^{ij}\bar{\nabla}_ih\bar{\nabla}_j\Theta-2\Theta\sigma_k^{ab}r_a^mr_{mb}\\
&-2A h((1-k)\Theta \sigma_k+\Theta h\sigma_k^{ij}\bar{g}_{ij}-\eta h)+2A\Theta\sigma_k^{ab}\bar{\nabla}_ah\bar{\nabla}_bh.
\end{align*}
Let us put $v:=\log h$. Pick $A>1$ such that
$$(\rho^2-Ah^2)(\cdot,0)<0.$$ We will show that this inequality will be preserved, perhaps for a larger value of $A$ to be determined later. If it were otherwise, there would be a point
 $(u_{\tau},\tau)$ with $\tau>0$ that for the first time $(\rho^2-Ah^2)(u_{\tau},\tau)=0$. At this point
$\bar{\nabla}|\bar{\nabla}v|^2=0$ and we may choose an orthonormal frame $\{e_i\}$ such that
\[\bar{\nabla}v=(\bar{\nabla}_{e_1}v)e_1.\]
For the rest of the proof it is more convenient to put
\[v_i:=\bar{\nabla}_{e_i}v,\quad v_{ij}:=\bar{\nabla}_i\bar{\nabla}_jv.\]
Since $\bar{\nabla}|\bar{\nabla}v|^2=0$ at $u_{\tau}$, a rotation of $\{e_i\}_{i\geq 2}$ diagonalizes $(v_{ij})$ at $u_{\tau},$
\[a_{ij}:=v_{ij}+v_iv_j+\delta_{ij}=\operatorname{diag}(1+v_1^2,1+v_{22},\ldots,1+v_{nn}).\]
Also, note that
\[\sigma_k(h_{ij}+\delta_{ij}h)=h^k\sigma_k(a_{ij}),\quad \sigma_k^{cd}(h_{ij}+\delta_{ij}h)=h^{k-1}\sigma_k^{cd}(a_{ij}).\]
Therefore at $(u_{\tau},\tau)$ we have
\begin{align*}0\leq& (k+1)+(2-p)v_1^2+v_1(\log\varphi)_1-\frac{\sigma_k^{ll}(a_{ij})}{\sigma_k(a_{ij})}(a_{ll})^2\\
&+A(k-1)-A\sum_l\frac{\sigma_k^{ll}(a_{ij})}{\sigma_k(a_{ij})}+A\frac{\sigma_k^{11}(a_{ij})}{\sigma_k(a_{ij})}v_1^2.
\end{align*}
Since $A=1+v_1^2=a_{11},$ we may rewrite the previous estimate as
\begin{align*}
0\leq& 2k+(1+k-p)(A-1)+v_1(\log\varphi)_1-\frac{\sigma_k^{11}(a_{ij})}{\sigma_k(a_{ij})}A^2\\
&-A\sum_l\frac{\sigma_k^{ll}(a_{ij})}{\sigma_k(a_{ij})}+A(A-1)\frac{\sigma_k^{11}(a_{ij})}{\sigma_k(a_{ij})}.
\end{align*}
Thus for $p>k+1$ we arrive at
\[(p-k-1)(A-1)\leq 2k+|(\log\varphi)_1|\sqrt{A-1}.\]
Choosing $A$ large enough ensures that $\rho^2-Ah^2$ always remains negative.

If $p=k+1$, by Lemma \ref{lower and upper speed bounds}, $\sigma_k(a_{ij})$ is uniformly bounded above. Also, since $\sigma_k^{\frac{1}{k}}$ is concave, we have $\sum_l\sigma_{k}^{ll}(a_{ij})\geq k\sigma_k^{\frac{k-1}{k}}(a_{ij})$; see, for instance, \cite{AMZ}. Thus for a positive constant $c_1$ depending only on the initial hypersurface and $\varphi$ we have
\begin{align*}
c_1A\leq A\sum_l\frac{\sigma_k^{ll}(a_{ij})}{\sigma_k(a_{ij})}\leq& 2k+|(\log\varphi)_1|\sqrt{A-1}.
\end{align*}
Choosing $A$ large enough proves the claim.
\end{proof}
\begin{lemma}\label{key lemma}
Suppose $p\geq k+1.$ There exist positive constants $a,b,c,d$ depending only on the initial hypersurface and $\varphi$ such that
$a\leq h(\cdot,\tau)\leq b$ and
$c\leq \sigma_k(\cdot,\tau)\leq d.$
\end{lemma}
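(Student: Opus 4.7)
The plan is to combine a global integral control on $h$ with the pointwise Lipschitz bound on $\log h$ already established in Lemma \ref{grad estimate}, so that $h$ is pinched above and below by positive constants. Once $h$ is pinched, the $\sigma_k$ bounds follow immediately from Lemma \ref{lower and upper speed bounds} together with positivity of $\varphi$.

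First I would record that, along the normalized flow, the quantity $Y(\tau):=\int_{\mathbb{S}^n}\frac{h^p(\cdot,\tau)}{\varphi}\,dx$ is trapped in a positive bounded interval. Indeed, the ODE computed right after (\ref{nf}) reads $Y'=p\eta\bigl(\int_{\mathbb{S}^n}\frac{1}{\varphi}\,dx-Y\bigr)$, and since $\eta>0$ (both $h$ and $\sigma_k$ are positive for a strictly convex hypersurface), $Y(\tau)$ is monotone and converges toward $\int_{\mathbb{S}^n}\frac{1}{\varphi}\,dx$. Hence
\[
0<Y_{\min}:=\min\Bigl\{Y(0),\int_{\mathbb{S}^n}\tfrac{1}{\varphi}\,dx\Bigr\}\le Y(\tau)\le \max\Bigl\{Y(0),\int_{\mathbb{S}^n}\tfrac{1}{\varphi}\,dx\Bigr\}=:Y_{\max}<\infty.
\]

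Next I would use Lemma \ref{grad estimate}, which says $|\bar{\nabla}\log h(\cdot,\tau)|\le \gamma$ uniformly. Since the intrinsic diameter of $\mathbb{S}^n$ is $\pi$, this gives the Harnack-type bound $\max_{\mathbb{S}^n}h(\cdot,\tau)\le e^{\gamma\pi}\min_{\mathbb{S}^n}h(\cdot,\tau)$. Combining this with the trivial estimates $Y(\tau)\le \max_{\mathbb{S}^n}h^p\int\frac{1}{\varphi}\,dx$ and $Y(\tau)\ge \min_{\mathbb{S}^n}h^p\int\frac{1}{\varphi}\,dx$ yields
\[
\max_{\mathbb{S}^n}h(\cdot,\tau)\ge \Bigl(\tfrac{Y_{\min}}{\int 1/\varphi\,dx}\Bigr)^{1/p},\qquad \min_{\mathbb{S}^n}h(\cdot,\tau)\le \Bigl(\tfrac{Y_{\max}}{\int 1/\varphi\,dx}\Bigr)^{1/p},
\]
so that together with the Harnack bound $\max h\le e^{\gamma\pi}\min h$ one obtains a uniform positive lower bound $a$ on $\min h$ and a uniform finite upper bound $b$ on $\max h$. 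This gives $a\le h(\cdot,\tau)\le b$.

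Finally, Lemma \ref{lower and upper speed bounds} furnishes positive constants with $c_0\le \varphi h^{1-p}\sigma_k\le C_0$. Since $\varphi\in C^\infty(\mathbb{S}^n)$ is positive on a compact space it satisfies $\varphi_{\min}\le \varphi\le \varphi_{\max}$ with both constants positive, and the just-established bounds on $h$ give matching bounds on $h^{1-p}$. Dividing yields the claimed $c\le \sigma_k(\cdot,\tau)\le d$. The only non-routine ingredient is the compatibility between the integral identity and the Lipschitz estimate for $\log h$; no essentially new obstacle beyond what has already been overcome in Lemmas \ref{lower and upper speed bounds} and \ref{grad estimate} appears.
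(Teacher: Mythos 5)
Your proposal is correct and follows essentially the same route as the paper: the (preserved) quantity $\int_{\mathbb{S}^n}h^p/\varphi\,dx$ pins $h_{\max}$ from below and $h_{\min}$ from above, the gradient estimate of Lemma \ref{grad estimate} supplies the Harnack-type pinching $h_{\max}\le e^{\gamma\pi}h_{\min}$, and the $\sigma_k$ bounds then drop out of Lemma \ref{lower and upper speed bounds}. The only cosmetic difference is that the paper simply uses that this integral is exactly constant along the normalized flow, whereas you re-derive a trapping interval from the ODE, which is a harmless generalization.
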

\begin{proof}
Along the normalized flow $\int_{\mathbb{S}^n}\frac{h^p(x,\tau)}{\varphi(x)}dx$ is constant and
\[\frac{h_{\min}^p(\tau)}{\varphi_{\max}}\leq \frac{1}{\omega_n}\int_{\mathbb{S}^n}\frac{h^p(x,\tau)}{\varphi(x)}dx\leq \frac{h_{\max}^p(\tau)}{\varphi_{\min}}.\]
Therefore Lemma \ref{grad estimate} gives uniform lower and upper bounds on $h(\cdot,\tau).$ Now the lower and upper bounds on $\sigma_k(\cdot,\tau)$ follow from Lemma \ref{lower and upper speed bounds}.
\end{proof}
Next we will obtain a lower bound on the principal radii of curvature under an additional assumption on $\varphi$. This in turns implies that the normalized hypersurfaces are uniformly convex. It is only in the following lemma that we require $\bar{\nabla}_i\bar{\nabla}_j\varphi^{\frac{1}{p+k-1}}+\bar{g}_{ij}\varphi^{\frac{1}{p+k-1}}$ to be positive definite. Our example in the final section shows that one cannot hope for a positive lower for $r_{ij}$ if $\bar{\nabla}_i\bar{\nabla}_j\varphi^{\frac{1}{p+k-1}}+\bar{g}_{ij}\varphi^{\frac{1}{p+k-1}}$ is negative definite at some point.
\begin{lemma}\label{lower and upper princpal bounds}
Let $p\geq k+1$. Suppose $\bar{\nabla}_i\bar{\nabla}_j\varphi^{\frac{1}{p+k-1}}+\bar{g}_{ij}\varphi^{\frac{1}{p+k-1}}$ is positive definite.
Then the principal curvatures satisfy
$l\leq \kappa_i(\cdot,\tau)\leq L$
for some positive constants $l,L$ depending only on the initial hypersurface and $\varphi.$
\end{lemma}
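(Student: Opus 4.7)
The plan is to split the bound on $\kappa_i=1/r_i$ into its two directions. The upper bound on $\kappa_i$ (equivalently, a uniform positive lower bound on $r_{ij}$) is the delicate direction and uses the $(p+k-1)$-convexity hypothesis. The lower bound on $\kappa_i$ (equivalently, an upper bound on the principal radii) then follows by elementary symmetric function arithmetic: once $r_{ij}\geq \delta\bar{g}_{ij}$ uniformly, the upper bound $\sigma_k\leq d$ from Lemma \ref{key lemma} gives $\binom{n}{k}\sigma_k\geq \binom{n-1}{k-1}\delta^{k-1}\lambda_{\max}$, so the largest principal radius is bounded above by $\frac{n}{k}d\,\delta^{-(k-1)}$.

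For the delicate direction, set $f:=\varphi^{\frac{1}{p+k-1}}$; by hypothesis $P_{ij}:=\bar{\nabla}_i\bar{\nabla}_j f+f\bar{g}_{ij}\geq\delta_0\bar{g}_{ij}$ on $\mathbb{S}^n$ for some $\delta_0>0$. I would introduce the very simple auxiliary function
\begin{align*}
W(x,\tau):=\min_{|\xi|=1}\frac{r_{ij}(x,\tau)\xi^i\xi^j}{f(x)}=\frac{\lambda_{\min}(r(x,\tau))}{f(x)},
\end{align*}
and use the parabolic minimum principle to show $W\geq c>0$ with $c$ depending only on the initial data. At a space-time infimum $(x_0,\tau_0)$, rotate coordinates so that the minimizing direction is $e_1$ and $r_{ij}$ is diagonal at the point (killing the antisymmetric term $\Theta(\sigma_k^{a1}r_{a1}-\sigma_k^{a1}r_{a1})$ in Lemma \ref{ev equ}); then $W=r_{11}/f$, and the first- and second-order conditions $\bar{\nabla}W=0$ and $\bar{\nabla}^2 W\geq 0$ simplify, after substituting the first-order relation, to $\bar{\nabla}_i r_{11}=W\bar{\nabla}_i f$ and $\bar{\nabla}_i\bar{\nabla}_j r_{11}\geq W\bar{\nabla}_i\bar{\nabla}_j f$. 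Contracting the second-order inequality with $\Theta\sigma_k^{ab}$ and using $\bar{\nabla}_i\bar{\nabla}_j f=P_{ij}-f\bar{g}_{ij}$ gives $\mathcal{L}r_{11}\geq W\Theta[\sigma_k^{ab}P_{ab}-f\sigma_k^{ab}\bar{g}_{ab}]$, which together with $\partial_\tau r_{11}\leq 0$ and the evolution identity from Lemma \ref{ev equ} (with $r_{11}=Wf$) yields an inequality in which the dominant strictly positive term $(k+1)\Theta\sigma_k$ is bounded below by Lemmas \ref{lower and upper speed bounds} and \ref{key lemma}, the contribution $W\Theta\sigma_k^{ab}P_{ab}$ is non-negative by the hypothesis, and the concavity term $\Theta\sigma_k^{ab,mn}\bar{\nabla}_1 r_{ab}\bar{\nabla}_1 r_{mn}$ is non-positive by the concavity of $\sigma_k^{1/k}$; the remaining terms are of order $W$ or bounded perturbations and, for $W$ sufficiently small, cannot overcome the positive $(k+1)\Theta\sigma_k$, giving a contradiction.

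The main obstacle I anticipate is the careful handling of the two gradient terms $2\bar{\nabla}_1\Theta\bar{\nabla}_1\sigma_k$ and $\sigma_k\bar{\nabla}_1\bar{\nabla}_1\Theta$ in the evolution of $r_{11}$. The second is bounded by the product rule applied to $\Theta=\varphi h^{2-p}$ combined with $\bar{\nabla}_1\bar{\nabla}_1 h=r_{11}-h=Wf-h$ and the uniform bounds from Lemmas \ref{grad estimate}, \ref{lower and upper speed bounds} and \ref{key lemma}. The first is subtler: since $\bar{\nabla}_1\sigma_k=\sum_a\sigma_k^{aa}\bar{\nabla}_1 r_{aa}$, one needs to control the off-diagonal derivatives $\bar{\nabla}_1 r_{aa}$ for $a>1$, which are not given by the first-order condition on $W$. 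The standard remedy is to bound them via Cauchy--Schwarz and absorb their squares into the non-positive concavity quadratic form, exploiting the inverse-concavity properties of $\sigma_k^{1/k}$ in the positive cone.
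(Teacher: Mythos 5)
Your overall skeleton (a minimum principle for a normalized smallest radius of curvature, followed by the elementary observation that $r_{ij}\geq\delta\bar g_{ij}$ together with $\sigma_k\leq d$ bounds the largest radius) is the same as the paper's, but two steps in the delicate direction do not close. First, the normalization matters: you divide $\lambda_{\min}$ by the fixed function $f=\varphi^{\frac{1}{p+k-1}}$, whereas the paper works with $r^{11}/h$ or $h\lambda_1$. With your choice, the term $-\Theta\sigma_k^{ab}\bar g_{ab}r_{11}$ from Lemma \ref{ev equ} survives, and your second-order test inequality $\mathcal{L}r_{11}\geq W\Theta\sigma_k^{ab}\bar\nabla_a\bar\nabla_b f=W\Theta\sigma_k^{ab}(P_{ab}-f\bar g_{ab})$ adds a second copy $-r_{11}\Theta\sigma_k^{ab}\bar g_{ab}$. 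These leftovers are not ``of order $W$'': by Euler's relation $\sum_a\lambda_a\sigma_k^{aa}=k\sigma_k$ and $\sigma_k^{aa}\geq 0$, the quantity $r_{11}\sigma_k^{ab}\bar g_{ab}=\lambda_{\min}\sum_a\sigma_k^{aa}$ is only bounded by $k\sigma_k$, and it is genuinely of that size in degenerating configurations (e.g.\ radii $(\varepsilon,\dots,\varepsilon,M)$ with $\varepsilon M$ of order one), so the combined negative contribution is comparable to, and for $k\geq 2$ can exceed, the single positive term $(k+1)\Theta\sigma_k$ you rely on. The paper's test functions are chosen precisely so that this trace term cancels against $+\Theta h\sigma_k^{ij}\bar g_{ij}$ in the evolution of $h$; Remark \ref{rem}-(4) notes that only for $k=1$, where $\sigma_1^{ab}\bar g_{ab}$ is constant, can one work with $r_{11}$ un-normalized by $h$.

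Second, the hypothesis on $\varphi$ cannot enter merely as ``$W\Theta\sigma_k^{ab}P_{ab}\geq 0$, discard it,'' with $\sigma_k\bar\nabla_1\bar\nabla_1\Theta$ and $2\bar\nabla_1\Theta\bar\nabla_1\sigma_k$ treated as bounded perturbations dominated by $(k+1)\Theta\sigma_k$ for small $W$. The term $\bar\nabla_1\sigma_k$ is not a priori bounded, and after it is absorbed (via the inverse-concavity inequality (\ref{inv conc}) plus Cauchy--Schwarz, at the cost of $\tfrac{k}{k+1}\sigma_k(\bar\nabla_1\Theta)^2/\Theta$), what remains, $\sigma_k\bar\nabla_1\bar\nabla_1\Theta$, is a bounded negative term of the same order as $(k+1)\Theta\sigma_k$ with no smallness in $W$; if crude domination sufficed, the same argument would yield the curvature bound with no condition on $\varphi$, which the example in the final section rules out. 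In the paper the hypothesis is used through the exact identity $(k+1)\Theta+\bar\nabla_1\bar\nabla_1\Theta-\tfrac{k}{k+1}(\bar\nabla_1\Theta)^2/\Theta=(k+1)\Theta^{\frac{k}{k+1}}\bigl(\Theta^{\frac{1}{k+1}}+(\Theta^{\frac{1}{k+1}})_{ss}\bigr)$, whose expansion leaves, besides a multiple of $r_{11}$ (harmless at the critical point) and a square of favourable sign for $p\geq 2$, exactly $(p+k-1)\varphi^{-\frac{1}{p+k-1}}\bigl((\varphi^{\frac{1}{p+k-1}})_{ss}+\varphi^{\frac{1}{p+k-1}}\bigr)$, which (\ref{assum}) makes strictly positive. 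Two further points: your sign claim for $\Theta\sigma_k^{ab,mn}\bar\nabla_1r_{ab}\bar\nabla_1r_{mn}$ is false ($\sigma_k$ itself is not concave, only $\sigma_k^{1/k}$ is), and the inverse-concavity absorption releases $-2\sigma_k^{am}r^{nb}\bar\nabla_1r_{ab}\bar\nabla_1r_{mn}$, for which your auxiliary function generates no compensating positive squares; the paper obtains them either by evolving the inverse matrix $r^{11}$ or, in its second proof, from the $h\lambda_1$ lower-support inequalities that make the quantity $\mathcal{R}$ non-negative.
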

\begin{proof} We provide two proofs.

\emph{First proof}.
We apply the maximum principle to $\frac{r^{ij}}{h}$; see also \cite{JU1}*{Lemma 3.3}.
By a rotation of frame we may assume the maximum eigenvalue of $\frac{r^{ij}}{h}$ over $\mathbb{S}^n$ at time $\tau$ is attained at a point $u_{\tau}$ in the direction of unit tangent vector $e_1\in T_{u_{\tau}}\mathbb{S}^n$. In particular, $r^{ij}=0$ for $i\neq j.$ Using Lemma \ref{ev equ} we calculate
\begin{align*}
\partial_\tau \frac{r^{11}}{h}%=&\frac{\Theta}{h}\sigma_k^{ab}\bar{\nabla}_a\bar{\nabla}_b(h\frac{r^{11}}{h})-(k+1)\frac{\Theta}{h}\sigma_k(r^{11})^2+
%\frac{\Theta}{h}(\sigma_k^{ij}\bar{g}_{ij})r^{11}\\
%&-\frac{\Theta}{h} (r^{11})^2(2\sigma_k^{am}r^{nb}+\sigma_k^{ab,mn})\bar{\nabla}_1r_{ab}\bar{\nabla}_1r_{mn}\\
%&-\frac{(r^{11})^2}{h}(2\bar{\nabla}_1\Theta\bar{\nabla}_1\sigma_k+\sigma_k\bar{\nabla}_1\bar{\nabla}_1\Theta)-\frac{\Theta r^{11}\sigma_k}{h^2}+2\eta\frac{r^{11}}{h}\\
=&\Theta\sigma_k^{ab}\bar{\nabla}_a\bar{\nabla}_b\frac{r^{11}}{h}-(k+1)\frac{\Theta}{h}\sigma_k(r^{11})^2+\frac{\Theta}{h}(\sigma_k^{ij}\bar{g}_{ij})r^{11}\\
&-\frac{\Theta}{h} (r^{11})^2(2\sigma_k^{am}r^{nb}+\sigma_k^{ab,mn})\bar{\nabla}_1r_{ab}\bar{\nabla}_1r_{mn}\\
&-\frac{(r^{11})^2}{h}(2\bar{\nabla}_1\Theta\bar{\nabla}_1\sigma_k+\sigma_k\bar{\nabla}_1\bar{\nabla}_1\Theta)-\frac{\Theta r^{11}\sigma_k}{h^2}\\
&+2\frac{\Theta}{h}\sigma_k^{ab}\bar{\nabla}_ah\bar{\nabla}_b\frac{r^{11}}{h}+\frac{\Theta r^{11}}{h^2}\sigma_k^{ab}(r_{ab}-\bar{g}_{ab}h)+2\eta\frac{r^{11}}{h}.
\end{align*}
Balancing the terms gives
\begin{align*}
\left(\partial_\tau-\mathcal{L}-2\eta\right) \frac{r^{11}}{h}=&2\frac{\Theta}{h}\sigma_k^{ab}\bar{\nabla}_ah\bar{\nabla}_b\frac{r^{11}}{h}-(k+1)\frac{\Theta}{h}\sigma_k(r^{11})^2\\
&-\frac{\Theta}{h} (r^{11})^2(2\sigma_k^{am}r^{nb}+\sigma_k^{ab,mn})\bar{\nabla}_1r_{ab}\bar{\nabla}_1r_{mn}\\
&-\frac{(r^{11})^2}{h}(2\bar{\nabla}_1\Theta\bar{\nabla}_1\sigma_k+\sigma_k\bar{\nabla}_1\bar{\nabla}_1\Theta)+(k-1)\frac{\Theta\sigma_k r^{11}}{h^2}.
\end{align*}
To suitably group the terms on the right-hand side, note that
\begin{enumerate}
  \item
$f:=\sigma_k^{\frac{1}{k}}$ is inverse concave; therefore, by \cite{JU}*{(3.49)} we get
\begin{align}\label{inv conc}
(2f^{am}r^{bn}+f^{ab,mn})\bar{\nabla}_1r_{ab}\bar{\nabla}_1r_{mn}\geq 2\frac{(\bar{\nabla}_1 f)^2}{f}.
\end{align}
This in turn implies that
\begin{align*}(2\sigma_k^{am}r^{bn}+\sigma_k^{ab,mn})\bar{\nabla}_1r_{ab}\bar{\nabla}_1r_{mn}&\geq (k^2+k)f^{k-2}(\bar{\nabla}_1 f)^2\\
&=\frac{k+1}{k}\frac{(\bar{\nabla}_1 f^k)^2}{f^k}.
\end{align*}
  \item By the Schwartz inequality,\[2|\bar{\nabla}_1\Theta\bar{\nabla}_1f^k|\leq \frac{k+1}{k}\frac{\Theta(\bar{\nabla}_1 f^k)^2}{f^k}+\frac{k}{k+1}\frac{f^k(\bar{\nabla}_1 \Theta)^2}{\Theta}.\]
\end{enumerate}
Due to the preceding estimates, at $(u_{\tau},\tau)$ there holds
\begin{align*}
\partial_{\tau}\frac{r^{11}}{h}\leq-\frac{(r^{11})^2\sigma_k}{h}\left((k+1)\Theta+\bar{\nabla}_1\bar{\nabla}_1\Theta-\frac{k(\bar{\nabla}_1 \Theta)^2}{(k+1)\Theta}+(1-k)\frac{\Theta r_{11}}{h}\right)+\frac{2\eta r^{11}}{h}.
\end{align*}
Let $s$ be the arc-length of the great circle passing through $u_{\tau}$ with unit tangent vector $e_1.$
The sum of the first three terms in the bracket may be expressed as
\begin{align*}
(k+1)\Theta^{\frac{k}{k+1}}\left(\Theta^{\frac{1}{k+1}}+(\Theta^{\frac{1}{k+1}})_{ss}\right).
\end{align*}
Expand this last expression for $p+k-1>0$,
\begin{align*}
&\frac{\Theta^{\frac{1}{k+1}}+(\Theta^{\frac{1}{k+1}})_{ss}}{\Theta^{\frac{1}{k+1}}}\\
=&1+2\frac{2-p}{(k+1)^2}\frac{h_s\varphi_s}{h\varphi}
+\frac{(p-2)(p+k-1)}{(k+1)^2}\frac{h_s^2}{h^2}\\
&-\frac{k}{(k+1)^2}\frac{\varphi_s^2}{\varphi^2}+\frac{2-p}{k+1}\frac{h_{ss}}{h}+\frac{1}{k+1}\frac{\varphi_{ss}}{\varphi}\\
=& \frac{2-p}{k+1}\frac{h+h_{ss}}{h}+\frac{p-2}{(k+1)^2h\varphi}
\left(h_s\left(\frac{(p+k-1)\varphi}{h}\right)^{\frac{1}{2}}+\varphi_s\left(\frac{h}{(p+k-1)\varphi}\right)^{\frac{1}{2}}\right)^2\\
&+\frac{1}{\varphi(k+1)}\left\{\left(p+k-1\right)\varphi-\left(\frac{1}{(k+1)^2}\frac{p-2}{p+k-1}+\frac{k}{(k+1)^2}\right)\frac{\varphi_s^2}{\varphi}
+\varphi_{ss}\right\}\\
=& \frac{2-p}{k+1}\frac{h+h_{ss}}{h}+\frac{1}{k+1}\left\{\left(p+k-1\right)-\frac{p+k-2}{p+k-1}\left(\frac{\varphi_s}{\varphi}\right)^2
+\frac{\varphi_{ss}}{\varphi}\right\}\\
&+\frac{p-2}{(k+1)^2h\varphi}
\left(h_s\left(\frac{(p+k-1)\varphi}{h}\right)^{\frac{1}{2}}+\varphi_s\left(\frac{h}{(p+k-1)\varphi}\right)^{\frac{1}{2}}\right)^2.
\end{align*}
Let $p\geq 2$ and assume either of the following equivalent conditions hold
 \begin{align}\label{assum}
 \left\{
   \begin{array}{ll}
&(\varphi^{\frac{1}{p+k-1}})_{ss}+\varphi^{\frac{1}{p+k-1}}>0~\mbox{on every great circle},\\
&\bar{\nabla}_i\bar{\nabla}_j\varphi^{\frac{1}{p+k-1}}+\bar{g}_{ij}\varphi^{\frac{1}{p+k-1}}~\mbox{is positive definite}.
   \end{array}
 \right.
 \end{align}
Under this condition we conclude that
\begin{align*}
\partial_\tau\frac{r^{11}}{h}&\leq -\Theta\sigma_k\left(\frac{r^{11}}{h}\right)^2\left(c_{\varphi}h+(3-k-p)r_{11}\right)+2\eta\frac{r^{11}}{h}\leq-c_1\left(\frac{r^{11}}{h}\right)^2
+c_2\frac{r^{11}}{h},
\end{align*}
where $c_{\varphi}>0$ depends on the smallest eigenvalue of $\bar{\nabla}_i\bar{\nabla}_j\varphi^{\frac{1}{p+k-1}}+\bar{g}_{ij}\varphi^{\frac{1}{p+k-1}}$ with respect to $\bar{g}$ and we used the lower and upper bounds on $h,\sigma_k,\eta$ from Lemmas  \ref{eta}, \ref{key lemma}.
By the maximum principle
$r^{11}\leq L$
for some $L$ depending on $M_0,\varphi.$
Thus the principal radii of curvature satisfy
$\frac{1}{L}\leq\lambda_i.$
To finish the proof, note that $\frac{1}{L^{k-1}}(\max\lambda_i)\leq \sigma_k\leq d.$

\emph{Second proof}. Arrange the principal radii of curvature as $\lambda_1\leq\cdots\leq\lambda_n.$
We show that $h\lambda_1$ satisfies a suitable differential inequality in a viscosity sense.

Let us fix a point $(u_{\tau},\tau)$ with $\tau>0$ and suppose at this point the multiplicity of $\lambda_1$ is $\mu$; that is,
$\lambda_1=\cdots=\lambda_{\mu}<\lambda_{\mu+1}\leq\cdots\leq\lambda_n.$
Choose an orthonormal frame for $T_{u_{\tau}}\mathbb{S}^{n}$ such that
$r_{ij}=\lambda_i\delta_{ij},~\bar{g}_{ij}=\delta_{ij}.$

Let $\xi$ be an arbitrary $C^2$ lower support of $h\lambda_1$ at $(u_{\tau},\tau)$. That is, for some $\varepsilon>0$ and an open neighborhood $\mathcal{O}_{u_{\tau}}$ of $u_{\tau}$, we have $\xi(u,t)\leq (h\lambda_1)(u,t)$ for all $(u,t)\in \mathcal{O}_{u_{\tau}}\times (\tau-\varepsilon,\tau]$ and $\xi(u_{\tau},\tau)=(h\lambda_1)(u_{\tau},\tau).$ With similar calculations as in \cite{brd}*{Lemma 5} at $(u_{\tau},\tau)$ we obtain
\begin{itemize}
  \item $\bar{\nabla}_i\bar{\nabla}_i\xi\leq \bar{\nabla}_i\bar{\nabla}_ihr_{11}-2h\sum_{b>\mu}\frac{(\bar{\nabla}_ir_{1b})^2}{\lambda_b-\lambda_1},$
  \item $r_{kl}\bar{\nabla}_ih+h\bar{\nabla}_ir_{kl}=\delta_{kl}\bar{\nabla}_i\xi$ for all $1\le k,l\leq \mu.$
\end{itemize}
In addition, note that
\[\forall (u,t)\in \mathcal{O}_{u_{\tau}}\times (\tau-\varepsilon,\tau]:\quad h\frac{r_{1k}\bar{g}^{kl}r_{l1}}{r_{11}}\geq h\lambda_1\geq \xi.
\]
In fact, assume the hypersurface is given as an embedding of $\mathbb{S}^n$ via the inverse Gauss map. Then the second fundamental form is $r_{ij}$. By \cite{ChoiQk}*{Proposition 4.1}, in any chart we have $\frac{r_{11}}{g_{11}}\leq \frac{1}{\lambda_1}$ where $g_{ij}$ is the metric of the hypersurface. Due to the identity $g_{ii}=r_{ik}\bar{g}^{kl}r_{li}$, we obtain $h\frac{r_{1k}\bar{g}^{kl}r_{l1}}{r_{11}}\geq h\lambda_1\geq \xi$. Moreover, since this last inequality becomes an equality at $(u_{\tau},\tau)$, we get
\[\partial_\tau(hr_{11})\big|_{(u_{\tau},\tau)}=\partial_\tau\left(h\frac{r_{1k}\bar{g}^{kl}r_{l1}}{r_{11}}\right)\big|_{(u_{\tau},\tau)}\leq \partial_{\tau}\xi\big|_{(u_{\tau},\tau)}.\]
Putting all these facts together yields
\begin{align*}
(\partial_\tau-\mathcal{L})\xi
\geq&h(\partial_\tau-\mathcal{L})r_{11}+\xi(\partial_\tau-\mathcal{L})h
+2h\Theta\sigma_k^{aa}\sum_{b>\mu}\frac{(\bar{\nabla}_ar_{1b})^2}{\lambda_b-\lambda_1}\\
&+2\Theta\sigma_k^{aa}\frac{(\bar{\nabla}_a\xi-\lambda_1\bar{\nabla}_ah)^2}{\xi}-2\Theta\sigma_k^{ab}\bar{\nabla}_a\xi\bar{\nabla}_b\log \frac{\xi}{h}.
\end{align*}
Then owing to Lemma \ref{ev equ} and (\ref{inv conc}), we arrive at the estimate
\begin{align*}
&(\partial_\tau-\mathcal{L})\xi+2\Theta\sigma_k^{ab}\bar{\nabla}_a\xi\bar{\nabla}_b\log \frac{\xi}{h}+((k-1)\varphi h^{1-p}\sigma_k +2\eta)\xi\\
\geq& 2h\Theta\left(\sigma_k^{aa}\sum_{b>\mu}\frac{(\bar{\nabla}_ar_{1b})^2}{\lambda_b-\lambda_1}-\sigma_k^{aa}r^{bb}(\bar{\nabla}_1r_{ab})^2
+\sigma_k^{aa}\frac{(\bar{\nabla}_a\xi-\lambda_1\bar{\nabla}_ah)^2}{\lambda_1h^2}\right)\\
&+(k+1)h\Theta\sigma_k
+2h\bar{\nabla}_1\Theta\bar{\nabla}_1\sigma_k
+h\sigma_k\bar{\nabla}_1\bar{\nabla}_1\Theta+\frac{k+1}{k}h\Theta\frac{(\bar{\nabla}_1 \sigma_k)^2}{\sigma_k}.
\end{align*}
By Schwartz's inequality we obtain
\begin{align*}
&(\partial_\tau-\mathcal{L})\xi+2\Theta\sigma_k^{ab}\bar{\nabla}_a\xi\bar{\nabla}_b\log \frac{\xi}{h}+((k-1)\varphi h^{1-p}\sigma_k +2\eta)\xi\\
&\geq 2h\Theta\left(\sigma_k^{aa}\sum_{b>\mu}\frac{(\bar{\nabla}_ar_{1b})^2}{\lambda_b-\lambda_1}-\sigma_k^{aa}r^{bb}(\bar{\nabla}_1r_{ab})^2
+\sigma_k^{aa}\frac{(\bar{\nabla}_a\xi-\lambda_1\bar{\nabla}_ah)^2}{\lambda_1h^2}\right)\\
&+h\sigma_k\left((k+1)\Theta
+\bar{\nabla}_1\bar{\nabla}_1\Theta-\frac{k}{k+1}\frac{(\bar{\nabla}_1 \Theta)^2}{\Theta}\right).
\end{align*}
We show that
\[\mathcal{R}:=\sigma_k^{aa}\sum_{b>\mu}\frac{(\bar{\nabla}_ar_{1b})^2}{\lambda_b-\lambda_1}-\sigma_k^{aa}r^{bb}(\bar{\nabla}_1r_{ab})^2
+\sigma_k^{aa}\frac{(\bar{\nabla}_a\xi-\lambda_1\bar{\nabla}_ah)^2}{\lambda_1h^2}~\hbox{is non-negative}.\]
To see this, note that we can estimate the second term in $\mathcal{R}$ as follows
\begin{align*}
r^{bb}(\bar{\nabla}_1r_{ab})^2=r^{bb}(\bar{\nabla}_ar_{1b})^2&=\sum_{b\leq \mu}r^{bb}(\bar{\nabla}_ar_{1b})^2+\sum_{b>\mu}r^{bb}(\bar{\nabla}_ar_{1b})^2\\
&=\frac{(\bar{\nabla}_a\xi-\lambda_1\bar{\nabla}_ah)^2}{\lambda_1h^2}+\sum_{b>\mu}\frac{1}{\lambda_b}(\bar{\nabla}_ar_{1b})^2.
\end{align*}
We may continue as in the final part of the first proof to deduce that at $(u_{\tau},\tau),$
\begin{align}\label{df ineq}
\partial_\tau \xi\geq \mathcal{L}\xi-2\Theta\sigma_k^{ab}\bar{\nabla}_a\xi\bar{\nabla}_b\log \frac{\xi}{h}-((p+k-3)\varphi h^{1-p}\sigma_k +2\eta)\xi+h^{3-p}\sigma_kc_{\varphi}
\end{align}
for a positive constant depending on the $C^2$ norm of $\varphi.$ Now suppose for the sake of contradiction that $h\lambda_1$ for the first time $\tau>0$ at $u_{\tau}$ equals to
\[\xi:=\frac{1}{2}\min\left\{\frac{\min (h^{3-p}\sigma_k)c_{\varphi}}{\max\left((p+k-3)\varphi h^{1-p}\sigma_k +2\eta\right)},\min(h\lambda_1(\cdot,0))\right\}.\]
So $\xi$ serves as a lower support for $h\lambda_1$ on $M^n\times [0,\tau]$. But (\ref{df ineq}) yields a contradiction.
\end{proof}
\begin{remark}\label{rem}
\item[(1)] Since mixed volumes are monotonic increasing in each argument \cite{Schneider}*{page 282},
\[h_{\min}^{k+1}\leq\frac{\int_{\mathbb{S}^n}h\sigma_kdx}{\omega_n}\leq h_{\max}^{k+1}.\]
If we have lower and upper estimates for $h(\cdot,\tau)$, then we have lower and upper bounds on $\eta(\tau).$
\item[(2)] From the proof of the previous lemma and the first remark it is clear that if $p\geq 2$ and one can establish lower and upper bounds for $h,\sigma_k$ along the normalized flow, then under the assumption (\ref{assum}) there are lower and upper bounds on the principal curvatures.
\item[(3)] For $p\geq k+1$ we could avoid using the result of \cite{Ger} to prove Lemma \ref{eta}. In fact, since $\bar{\nabla}_i\bar{\nabla}_jh$ is non-negative at the maximum point of $h$, by the monotonicity of the entropy we have
\begin{align*}
\partial_{\tau}h_{\max}\leq h_{\max}(\varphi_{\max} h_{\max}^{1+k-p} -\eta(\tau)) \leq h_{\max}(\varphi_{\max} h_{\max}^{1+k-p}-\eta(0))
\end{align*}
in the sense of the lim sup of forward difference quotients; see, \cite{ham}.
Since $p>k+1$, if $h_{\max}$ becomes too large, then the right-hand side will be negative. So $h$ remains bounded above by some constant $b$.
To get a lower bound on $h$, note that
$ \eta(\tau)\leq b^{k+1};$
therefore,
\begin{align*}
\partial_{\tau}h_{\min}\geq h_{\min}\left(\varphi_{\min} h_{\min}^{1+k-p} -\eta(\tau)\right)\geq h_{\min}\left(\varphi_{\min} h_{\min}^{1+k-p} -b^{k+1}\right).
\end{align*}
If $h_{\min}$ becomes strictly less than a critical value, then the right-hand side will be positive; therefore, $h(\cdot,\tau)\geq a$ for some positive constant $a.$ So we have shown for $p>k+1,$
$a^{k+1}\leq \eta\leq b^{k+1}.$
For $p=k+1$, the proof of Lemma \ref{grad estimate} does not need any control on $\eta$.
\item[(4)] If $k=1$, in the proof of the previous lemma we could apply the maximum principle directly to $r_{11}$ instead of $hr_{11}.$
\end{remark}
To obtain lower and upper bounds on $\sigma_k$ without the limitation set by homogeneity degree of the speed, we will need to use auxiliary functions that are not homogeneous. The next lemma gives a lower bound on $\sigma_k$ in a large generality. The proof does not use any particular structure of $\sigma_k.$
\begin{lemma}\label{lem lower speed bound}
Let $h(\cdot,\tau)$ be a solution to the flow (\ref{nf}) such that $a\leq h(\cdot,\tau)\leq b$ for some positive constants $a,b.$ Then $\sigma_k(\cdot,\tau)\geq c$ for a positive constant $c$.
\end{lemma}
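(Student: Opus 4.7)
Since $a\le h\le b$ and $\varphi\in C^\infty(\mathbb{S}^n)$ is positive, both $\Theta:=\varphi h^{2-p}$ and $\eta(\tau)$ are uniformly bounded between positive constants (the bound on $\eta$ comes from Remark \ref{rem}(1)). Because the flow reads $\partial_\tau h = \Theta\sigma_k - \eta h$, a uniform positive lower bound on $\sigma_k$ is equivalent to one on the full speed $Q:=\Theta\sigma_k$. The plan is to apply the parabolic maximum principle to an auxiliary function of the form
\[
\Psi(u,\tau) := -\log Q(u,\tau) + A\,h(u,\tau),
\]
for a sufficiently large constant $A>0$ depending on $a,b,p,k,\|\varphi\|_{C^2}$, and to show that $\sup_{\mathbb{S}^n}\Psi(\cdot,\tau)$ is bounded uniformly in $\tau$. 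Any such bound immediately yields $Q\ge e^{Ab-\Psi_{\max}}>0$, hence the desired $\sigma_k\ge c>0$.

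The computation proceeds from the evolution equations in Lemma \ref{ev equ}, together with the algebraic identity $\sigma_k^{ij}\bar\nabla_i\bar\nabla_jh = k(\sigma_k-h\sigma_{k-1})$. A direct calculation of $(\partial_\tau-\mathcal{L})\Psi$ produces a bulk term polynomial in $Q$ and $\eta$, plus gradient corrections $\sigma_k^{ij}\bar\nabla_i\log Q\,\bar\nabla_j h$ and $\sigma_k^{ij}\bar\nabla_i\log Q\,\bar\nabla_j\log Q$. At a spatial maximum of $\Psi$ the first-order condition $\bar\nabla\log Q = A\bar\nabla h$ substitutes these corrections by multiples of $\sigma_k^{ij}\bar\nabla_i h\,\bar\nabla_j h$, while $\mathcal{L}\Psi\le 0$ absorbs the Hessian of $Q$. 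The resulting contribution of $\bar\nabla^2\Theta$, a priori uncontrolled since $\Theta$ carries an $\bar\nabla^2 h$, is handled via the completing-the-square identity
\[
\sigma_k^{ij}\bigl[\bar\nabla_i\bar\nabla_j\Theta - 2A\bar\nabla_i\Theta\,\bar\nabla_j h + 2A^2\Theta\,\bar\nabla_i h\,\bar\nabla_j h\bigr] = e^{Ah}\sigma_k^{ij}\bar\nabla_i\bar\nabla_j\bigl(e^{-Ah}\Theta\bigr) + Ak\Theta(\sigma_k-h\sigma_{k-1}),
\]
which converts $\bar\nabla^2\Theta$ into $\bar\nabla^2(e^{-Ah}\Theta)$. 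The spherical Hessian of $e^{-Ah}\Theta = \varphi h^{2-p}e^{-Ah}$ is bounded in terms of $\|\log\varphi\|_{C^2}$ and the $C^0$ bound on $h$, together with the geometric gradient bound $|\bar\nabla h|\le h_{\max}\le b$ (valid for any convex body with origin in the interior, since $\bar\nabla h(x) = y^*(x)-h(x)x$ with $|y^*(x)|\le h_{\max}$). Choosing $A$ sufficiently large so that the $Ak\Theta\sigma_k = AkQ$ term dominates, one arrives at a differential inequality of the form
\[
\partial_\tau\Psi_{\max} \le C_1 - C_2\,e^{-\Psi_{\max}+Ah_{\min}},
\]
which prevents $\Psi_{\max}$ from escaping to $+\infty$.

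The principal obstacle is precisely the control of $\bar\nabla^2\Theta$: since only $h\in [a,b]$ is assumed, one has no \emph{a priori} upper bound on $\bar\nabla^2 h$, and $\bar\nabla^2\Theta$ carries such a Hessian. The exponential reparametrization $e^{-Ah}\Theta$ is introduced solely to absorb this bad contribution, trading it for the algebraic expression $k(\sigma_k-h\sigma_{k-1})$ coming from $\sigma_k^{ij}\bar\nabla^2 h$. The only facts about $\sigma_k$ invoked throughout are Euler's identity $\sigma_k^{ij}r_{ij}=k\sigma_k$ and the trace identity $\sigma_k^{ij}\bar g_{ij} = k\sigma_{k-1}$; no concavity or inverse-concavity of $\sigma_k^{1/k}$ is required. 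This is the sense in which the proof makes no use of any particular structure of $\sigma_k$, and the same argument would apply verbatim with $\sigma_k$ replaced by any smooth positive function of the principal radii whose matrix of first derivatives is positive definite.
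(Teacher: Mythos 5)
Your overall strategy (maximum principle applied to a logarithmic auxiliary function built from the speed plus a lower-order correction) is in the spirit of the paper, which uses $\chi:=\log(\Theta\sigma_k)-A\frac{\rho^2}{2}$ following \cite{HS}, but your specific choice $\Psi=-\log Q+Ah$ does not close, and the computation you describe is not the one your own ingredients produce. First, the worry about $\bar{\nabla}^2\Theta$ is misplaced: by Lemma \ref{ev equ} the speed $Q=\varphi h^{2-p}\sigma_k$ satisfies $(\partial_\tau-\mathcal{L})Q=(2-p)\frac{Q^2}{h}+Q\Theta\sigma_k^{ij}\bar{g}_{ij}-(2+k-p)\eta Q$, with no Hessian of $\Theta$ anywhere, so the $e^{-Ah}$-conjugation is a cure for a disease that is absent; worse, your claim that $\bar{\nabla}^2(e^{-Ah}\Theta)$ is bounded by $C^0$ and $C^1$ data of $h$ is false, since it still contains $\bar{\nabla}^2h$ (and the displayed identity is off anyway, $2A^2$ versus $A^2$). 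Second, and this is the genuine obstruction: with the correct evolution equations, $(\partial_\tau-\mathcal{L})\Psi$ contains the term $(Ah-1)\Theta\sigma_k^{ij}\bar{g}_{ij}$, coming from $+A\Theta h\sigma_k^{ij}\bar{g}_{ij}$ in the evolution of $h$ minus the single copy of $\Theta\sigma_k^{ij}\bar{g}_{ij}$ from $-\log Q$. Since $\sigma_k^{ij}\bar{g}_{ij}$ is proportional to $\sigma_{k-1}$, which is not controlled by $a\le h\le b$ and can be arbitrarily large precisely when $\sigma_k$ is small (think radii $(\varepsilon,M,\ldots,M)$), this is an unbounded term of the wrong sign for an upper bound on $\Psi$, and taking $A$ large only makes it worse; the hoped-for domination by $AkQ$ fails because $Q\to0$ is exactly the regime you must rule out. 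This is the reason the paper pairs $\log Q$ with $\rho^2$ rather than $h$: the evolution of $\frac{\rho^2}{2}$ trades $h\sigma_k^{ij}\bar{g}_{ij}$ for $-\Theta\sigma_k^{ab}r_a^mr_{mb}$, which enters the auxiliary function with a favorable sign and can simply be discarded, while $+A\eta\rho^2$ supplies the bounded definite term that dominates.

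Finally, even granting your computation, the concluding differential inequality $\partial_\tau\Psi_{\max}\le C_1-C_2e^{-\Psi_{\max}+Ah_{\min}}$ proves nothing: as $\Psi_{\max}\to+\infty$ the negative term vanishes and the inequality permits $\Psi_{\max}$ to grow linearly in $\tau$, so it does not prevent escape to infinity on an infinite time interval. You need the opposite structure, namely a bounded negative (respectively positive, for the paper's lower bound on $\chi$) term of definite size that dominates once $Q$ is small, with all $Q$-dependent terms tending to zero there; that is exactly what $\frac{A}{2}\eta\rho^2-(2+k-p)\eta>0$ achieves in the paper after choosing $A>\max\frac{2}{\rho^2}(2+k-p)$ and using the bounds on $|\bar{\nabla}h|$ and $\eta$ from Remark \ref{rem}.
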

\begin{proof}
Let $A>0$ be a constant to be determined later. We will apply the maximum principle to the following auxiliary function considered in \cite{HS}, \[\chi:=\log(\Theta\sigma_k)-A\frac{\rho^2}{2}.\]
Owing to Lemma \ref{ev equ} we have
\begin{align*}
\left(\partial_\tau-\mathcal{L}\right)\log(\varphi h^{2-p}\sigma_k)=&\Theta\sigma_k^{ab}\bar{\nabla}_a \log(\varphi h^{2-p}\sigma_k)\bar{\nabla}_b \log(\varphi h^{2-p}\sigma_k)\\
&+(2-p)\varphi h^{1-p}\sigma_k+\varphi h^{2-p}\sigma_k^{ij}\bar{g}_{ij}-(2+k-p)\eta.
\end{align*}
Consequently using Lemma \ref{ev equ} the evolution equation of $\chi$ reads as
\begin{align*}
\left(\partial_\tau-\mathcal{L}\right)\chi
=&\Theta\sigma_k^{ab}\bar{\nabla}_a \log(\varphi h^{2-p}\sigma_k)\bar{\nabla}_b \log(\varphi h^{2-p}\sigma_k)\\
&+(2-p)\varphi h^{1-p}\sigma_k+\varphi h^{2-p}\sigma_k^{ij}\bar{g}_{ij}-(2+k-p)\eta\\
&-A(k+1)h\Theta\sigma_k+A\eta \rho^2-A\sigma_k\bar{g}^{ij}\bar{\nabla}_ih\bar{\nabla}_j\Theta+A\Theta\sigma_k^{ab}r_a^mr_{mb}.
\end{align*}
Dropping some positive terms and rearranging terms yield
\begin{align*}
\left(\partial_\tau-\mathcal{L}\right)\chi
\geq&\left(\frac{A}{2}\eta\rho^2+(2-p)\frac{e^{\chi+A\frac{\rho^2}{2}}}{h}-(2+k-p)\eta\right)\\
&+A\Theta\sigma_k\left(\frac{\eta \rho^2}{2e^{\chi+A\frac{\rho^2}{2}}}-\bar{g}^{ij}\bar{\nabla}_ih\bar{\nabla}_j\log\Theta-(k+1)h\right).
\end{align*}
Choose $A>\max\frac{2}{\rho^2}(2+k-p)$. Thus if $\chi$ becomes very negative then the right-hand side becomes positive; this is due to the uniform upper bound on $|\bar{\nabla}_ah(\cdot,\tau)|$ and lower bound on $\eta;$ see Remark \ref{rem}.
\end{proof}
The next lemma gives an upper bound on $\sigma_k(\cdot,\tau)$ for every $p,k$. The proof employs the following property of $\sigma_k$ (due to its inverse concavity; see, e.g., \cite{AMZ}):
\begin{align}\label{inver concavity}
\sigma_k^{ab}r_a^mr_{mb}\geq k\sigma_k^{1+\frac{1}{k}}.
\end{align}
\begin{lemma}\label{upper speed bound}
Let $h(\cdot,\tau)$ be a solution to the flow (\ref{nf}) such that $\varepsilon\leq \rho^2(\cdot,\tau)\leq \frac{1}{\varepsilon}$ for an $0<\varepsilon<1.$ Then $\sigma_k(\cdot,\tau)\leq d$ for some positive constant $d$.
\end{lemma}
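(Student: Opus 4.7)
The plan is to apply the parabolic maximum principle to the auxiliary function
\[\chi:=\log(\Theta\sigma_k)+\tfrac{A}{2}\rho^2-\log h,\]
with a small constant $A>0$ to be chosen. From the hypothesis $\varepsilon\leq\rho^2\leq 1/\varepsilon$ I first extract the collateral bounds $\sqrt{\varepsilon}\leq h\leq 1/\sqrt{\varepsilon}$ (at the extrema of $h$ on $\mathbb{S}^n$ one has $\bar\nabla h=0$, so $\rho^2=h^2$ there) and $|\bar\nabla h|^2\leq 1/\varepsilon$. Consequently $\Theta=\varphi h^{2-p}$ is pinched between two positive constants, and by Remark~\ref{rem}(1) so is $\eta$.

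Using Lemma~\ref{ev equ}, at an interior space-time maximum of $\chi$ I would compute $(\partial_\tau-\mathcal{L})\chi$. The crucial reason for including the $-\log h$ term is the cancellation of $\Theta\sigma_k^{ij}\bar g_{ij}$, which appears with coefficient $+1$ in $(\partial_\tau-\mathcal{L})\log(\Theta\sigma_k)$ and with coefficient $-1$ in $(\partial_\tau-\mathcal{L})(-\log h)$. Substituting the critical-point identity
\[\bar\nabla_i\log(\Theta\sigma_k)=-A\,r_{ij}\bar\nabla^j h+\bar\nabla_i h/h\]
into the gradient-squared piece of $(\partial_\tau-\mathcal{L})\log(\Theta\sigma_k)$ produces three terms: the purely quadratic one is bounded by $A^2|\bar\nabla h|^2\,\Theta\sigma_k^{ab}r_a^m r_{bm}$ via the Cauchy-Schwartz-type inequality $\sigma_k^{ab}r_{ai}r_{bj}w^i w^j\leq|w|^2\sigma_k^{ab}r_a^m r_{bm}$ applied to $w=\bar\nabla h$; the cross term $-\frac{2A}{h}\Theta\sigma_k^{ab}r_{ai}\bar\nabla^i h\bar\nabla_b h$ is non-positive in a diagonal frame (it equals $-\frac{2A}{h}\Theta\sum_a\partial_a\sigma_k\,\lambda_a(\bar\nabla_a h)^2$) and can be dropped; and the remaining $\Theta\sigma_k^{ab}\bar\nabla_a h\bar\nabla_b h/h^2$ is exactly killed by the corresponding term in $-(\partial_\tau-\mathcal{L})\log h$.

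After these cancellations the inequality $(\partial_\tau-\mathcal{L})\chi\geq 0$ at the maximum reduces to
\[0\leq\bigl(A^2|\bar\nabla h|^2-A\bigr)\Theta\sigma_k^{ab}r_a^m r_{bm}+C_1\sigma_k+C_2,\]
with constants $C_1,C_2$ depending only on $\varepsilon,p,k$, the $C^1$-norm of $\varphi$, and the earlier bounds on $h,|\bar\nabla h|,\eta$. Picking $A=\varepsilon/2$ forces the coefficient of $\Theta\sigma_k^{ab}r_a^m r_{bm}$ to be at most $-A/2$, and inserting the inverse-concavity estimate (\ref{inver concavity}) yields $\tfrac{Ak}{2}\Theta_{\min}\,\sigma_k^{1+1/k}\leq C_1\sigma_k+C_2$. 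Since $\sigma_k^{1+1/k}$ grows strictly faster than $\sigma_k$, this bounds $\sigma_k$ at the maximum of $\chi$, and hence uniformly on $\mathbb{S}^n\times[0,T)$, because $\tfrac{A}{2}\rho^2-\log h$ is bounded. The main delicacy is the $\Theta\sigma_k^{ij}\bar g_{ij}$ cancellation: without the $-\log h$ summand the trace $\sigma_k^{ij}\bar g_{ij}$ cannot in general be absorbed into $\sigma_k^{1+1/k}$, since it may grow much faster than $\sigma_k^{(k-1)/k}$ when some principal radius becomes small, and the whole argument would collapse.
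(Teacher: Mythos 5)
Your proof is correct and relies on the same mechanism as the paper's: a maximum-principle argument for an auxiliary function coupling $\varphi h^{1-p}\sigma_k$ with $\rho^2$, where the superlinear damping comes from the $-\Theta\sigma_k^{ab}r_a^mr_{mb}$ term in the evolution of $\rho^2/2$ combined with the inverse-concavity estimate (\ref{inver concavity}). The only difference is cosmetic: the paper applies the maximum principle to the quotient $\varphi h^{1-p}\sigma_k/(1-\varepsilon\tfrac{\rho^2}{2})$, whose numerator's evolution (\ref{ev of speed divided by h}) already contains no trace term, whereas you use the additive logarithmic version $\log(\varphi h^{1-p}\sigma_k)+\tfrac{A}{2}\rho^2$ and recover the cancellation of $\Theta\sigma_k^{ij}\bar g_{ij}$ by subtracting $\log h$ explicitly.
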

\begin{proof}
We will apply the maximum principle to the auxiliary function \[\chi(\cdot,\tau):=\frac{\varphi h^{1-p}\sigma_k}{1-\varepsilon\frac{\rho^2}{2}}(\cdot,\tau).\] Meanwhile note that for two positive smooth functions $f,g:\mathbb{S}^n\times [0,T)\to \mathbb{R}$,
\[(\partial_{t}-\mathcal{L})\frac{f}{g}= \frac{1}{g}(\partial_{t}-\mathcal{L})f-\frac{f}{g^2}(\partial_{t}-\mathcal{L})g+2\Theta\sigma_k^{ij}\bar{\nabla}_i\log g\bar{\nabla}_j\frac{f}{g}. \]
Therefore in view of the evolution equation (\ref{ev of speed divided by h}) and Lemma \ref{ev equ}, at the point where $\chi$ attains its maximum we have
\begin{align}\label{time derv chi}
\partial_{\tau} \chi\leq& \frac{1}{1-\varepsilon\frac{\rho^2}{2}}\Biggl(2\varphi h^{1-p}\sigma_k^{ij}\bar{\nabla}_i(\varphi h^{1-p}\sigma_k)\bar{\nabla}_jh
\\
&+(1+k-p)\left(\varphi h^{1-p}\sigma_k\right)^2+(p-k-1)\eta\varphi h^{1-p}\sigma_k\Biggr)\nonumber\\
&+\frac{\varepsilon \varphi h^{1-p}\sigma_k}{(1-\varepsilon\frac{\rho^2}{2})^2}\left((k+1)h\Theta\sigma_k-\eta\rho^2
+\sigma_k\bar{g}^{ij}\bar{\nabla}_ih\bar{\nabla}_j\Theta-\Theta\sigma_k^{ab}r_a^mr_{mb}\right)\nonumber
\end{align}
and
\begin{align}\label{grad extra bonus}
\bar{\nabla}_i(\varphi h^{1-p}\sigma_k)=-\varepsilon\rho\frac{\varphi h^{1-p}\sigma_k\bar{\nabla}_i\rho}{1-\varepsilon\frac{\rho^2}{2}}=-
\varepsilon\frac{\varphi h^{1-p}\sigma_kr_{i}^m\bar{\nabla}_mh}{1-\varepsilon\frac{\rho^2}{2}}.
\end{align}
By Remark \ref{rem}, $\eta$ is bounded above. Putting (\ref{grad extra bonus}) into (\ref{time derv chi}), using (\ref{inver concavity}) and the lower and upper bounds on $\rho,h,\Theta$ and $|\bar{\nabla}\Theta|,|\bar{\nabla}h|$, we find that there exist positive constants $c_1,c_2,c_3$ such that
\[\partial_{\tau} \chi\leq c_1\chi+c_2\chi^2-c_3\chi^{2+\frac{1}{k}}.\]
The maximum principle completes the proof.
\end{proof}
\begin{remark}
The auxiliary function we considered in the previous lemma is very robust. Consider a curvature flow $\partial
_th=\varphi h^{\alpha}f^p,$
where $p>0$ and $f$ is a $1$-homogeneous function of principal radii of curvature satisfying $f^{ij}r_i^kr_{kj}\geq cf^2$ for some $c>0$ and $\varphi$ is a positive smooth function defined on the unit sphere. In particular, if $f$ is inverse concave or we are privileged with a pinching estimate the inequality holds. Then in the presence of lower and upper bounds on the support function we can apply the maximum principle to $\frac{\varphi h^{\alpha-1}f}{1-\varepsilon \rho^2}$ (for a suitable $\varepsilon>0$) to obtain a uniform upper bound on the speed.
%Consider a curvature flow $\partial_t h=-\varphi h^{\alpha}f^{-p}$, where $f$ is a 1-homogeneous, inverse concave function of principal radii of curvature satisfying $f^{ij}r_i^kr_{kj}\geq cf^{1+\frac{1}{p}}$ and $0<p<1.$ Suppose $\rho^2\geq 2\varepsilon$. Then using the auxiliary function
%$\frac{\varphi h^{\alpha-1}f^{-p}}{\rho^2-\varepsilon}$
%we can establish
%$f\leq c_{\varepsilon}t^{\frac{1}{p-1}}.$ This is very interesting; such a time-interior estimate for $\alpha=1$ and $\frac{1}{f}=K^{\frac{1}{n}}$ were established in \cite{BA2}*{Theorem 8} by using Harnack inequality and displacement bounds, and alternatively by using polar bodies \cite{Ivakidual}.
\end{remark}
\section{convergence of the normalized solution}
In this section we complete the proofs of our main theorems.

$\bullet$ Theorem \ref{main theorem1}: uniform $C^2$ regularity estimates were obtained in previous section.

$\bullet$ Theorem \ref{main theorem1a}: uniform lower and upper bounds on the support function and $\sigma_n(\cdot,\tau)$ follow from Lemma \ref{key lemma}. Moreover, uniform $C^2$ regularity estimates were proved in \cite{BIS}*{Lemma 8}.

$\bullet$ Theorem \ref{main theorem2}: the lower and upper bounds on $h(\cdot,\tau)$ follow from the strong gradient estimates of Chow-Gulliver \cite{CHGU}.
Therefore, by Remark \ref{rem}-(1), $\eta(\tau)$ is controlled from above and below. Now the lower and upper bounds on $\sigma_k(\cdot,\tau)$ follow from Lemmas \ref{lem lower speed bound} and \ref{upper speed bound}. Then by Remark \ref{rem}-(2), we can deduce uniform lower and upper bounds on the principal curvatures $\kappa_i(\cdot,\tau)$.

Now in all cases higher order regularity estimates follow from Krylov and Safonov \cite{Kryl} and Schauder theory. The convergence of the normalized solution for a subsequence of times to a soliton then follows from monotonicity of $\mathcal{A}_{k,p}^{\varphi}[h(\cdot,\tau)]$ established in Lemma \ref{monotonicity}. The convergence for the \emph{full} sequence of the normalized solution follows from the uniqueness result of \cite{Lut1}; see also \cite{HMS}*{page 149} for another proof of uniqueness.\footnote{For $p=k+1$, the uniqueness of a strictly convex solution to (\ref{soliton}) is upto dilations. Since here we are dealing with normalized solutions of the flow such that $\int_{\mathbb{S}^n}\frac{h^{k+1}(x,\tau)}{\varphi(x)}dx$ is constant, the limit is unique.}

%This section ends with the following corollary which follows from Theorem \ref{main theorem1} and the monotonicity of the functional $\mathcal{A}_{k,p}^{\varphi}$ along the flow.
%\begin{corollary}
%Suppose $p\geq k+1, k<n$ and $\varphi\in C^{\infty}(\mathbb{S}^n)$ is a positive function such that $\bar{\nabla}_i\bar{\nabla}_j\varphi^{\frac{1}{p+k-1}}+\bar{g}_{ij}\varphi^{\frac{1}{p+k-1}}$ is positive definite. Suppose $M_0$ is a smooth, closed, strictly convex hypersurface with support function $h_0>0.$ Then we have
%\[\mathcal{A}_{k,p}^{\varphi}[h_0]\leq \mathcal{A}_{k,p}^{\varphi}[\bar{h}],\]
%where $\bar{h}$ is a solution to (\ref{soliton}). Furthermore, equality holds if and only if $h_0$ is a solution to (\ref{soliton}).
%\end{corollary}
\section{Loss of smoothness}
\begin{example}
Suppose $p+k-1>0$ and $k<n$. There exist a rotationally symmetric positive function $\varphi\in C^2(\mathbb{S}^n)$ satisfying $((\varphi^{\frac{1}{p+k-1}})_{\theta\theta}+\varphi^{\frac{1}{p+k-1}})\big|_{\theta=0}<0$ and a smooth, closed, strictly convex initial hypersurface $M_0$ such that the solution to the flow (\ref{F-param}) will lose smoothness.
\end{example}
\begin{proof}
We follow the same approach as in \cite{AMZ}*{Corollary 1}. Let $0<h(\cdot,0)\in C^{\infty}(\mathbb{S}^n)$ be a rotationally symmetric support function (e.q., non-negative spherical hessian) such that $$h_{\theta\theta}+h\geq 0,\quad\sigma_k(\bar{\nabla}_i\bar{\nabla}_jh(\cdot,0)+\bar{g}_{ij}h(\cdot,0))>0.$$

Since $\sigma_k(\cdot,0)>0$, a rotationally symmetric solution to the following equation exists for a short time,
\begin{align}\label{exmaple}
h&:\mathbb{S}^n\times[0,T)\to \mathbb{R}\\
\partial_t h(\cdot,t)&=\varphi h^{2-p}\sigma_k(\bar{\nabla}_i\bar{\nabla}_jh+\bar{g}_{ij}h).\nonumber
\end{align}
The eigenvalues of $\bar{\nabla}_i\bar{\nabla}_jh+\bar{g}_{ij}h$ with respect to $\bar{g}$ are given by
\[\zeta_1=h_{\theta\theta}+h,\quad \zeta_2=\cdots=\zeta_n=h-\tan(\theta)h_{\theta}.\]
Also, note that
\[(\zeta_2)_{\theta}=\tan(\theta)(\zeta_2-\zeta_1),\quad (\zeta_2)_{\theta\theta}=(1+2\tan^2(\theta))(\zeta_2-\zeta_1)-\tan(\theta)(\zeta_1)_{\theta}.\]
 From the definition of $\zeta_{1}$, we obtain
\begin{align*}
\partial_t\zeta_1
=&\varphi h^{2-p}\frac{\partial\sigma_k}{\partial\zeta_i}(\zeta_i)_{\theta\theta}+\varphi h^{2-p}\frac{\partial^2\sigma_k}{\partial\zeta_i\partial\zeta_j}(\zeta_i)_{\theta}(\zeta_j)_{\theta}+\varphi h^{2-p}\sigma_k\\
&+2(\varphi h^{2-p})_{\theta}(\sigma_k)_{\theta}+\sigma_k(\varphi h^{2-p})_{\theta\theta}.
\end{align*}
Thus for the particular choice $\theta=0,$ we have
\begin{align*}
(\partial_t\zeta_1)\big|_{(0,t)}
=&\varphi h^{2-p}\frac{\partial\sigma_k}{\partial\zeta_1}(\zeta_1)_{\theta\theta}+(n-1)\varphi h^{2-p}\frac{\partial\sigma_k}{\partial\zeta_2}(\zeta_2-\zeta_1)+\varphi h^{2-p}\sigma_k\\
&+2\frac{\partial\sigma_k}{\partial\zeta_1}(\varphi h^{2-p})_{\theta}(\zeta_1)_{\theta}+\sigma_k(\varphi h^{2-p})_{\theta\theta}\\
=&\varphi h^{2-p}\frac{\partial\sigma_k}{\partial\zeta_1}(\zeta_1)_{\theta\theta}-\varphi h^{2-p}((n-1)\frac{\partial\sigma_k}{\partial\zeta_2}+\frac{\partial\sigma_k}{\partial\zeta_1})\zeta_{1}\\
&+2\frac{\partial\sigma_k}{\partial\zeta_1}(\varphi h^{2-p})_{\theta}(\zeta_1)_{\theta}+\sigma_k(\varphi h^{2-p})_{\theta\theta}+(k+1)\varphi h^{2-p}\sigma_k.
\end{align*}
Let $r$ be a $\pi$-periodic, $C^{\infty}$ function such that $r(\theta)=r(-\theta)$, it is zero on $[-\frac{\pi}{4},\frac{\pi}{4}]$ and positive elsewhere in $[-\frac{\pi}{2},\frac{\pi}{2}]$. Now define
\[h(\theta,0):=\sin(\theta)\int_0^{\theta}r(\alpha)\cos(\alpha)d\alpha+\cos(\theta)\int_{\theta}^{\frac{\pi}{2}}r(\alpha)\sin(\alpha)d\alpha.\]
Note that $(h(\cdot,0))_{\theta}\big|_{\theta=0}=0$ and for all $\theta\in[-\frac{\pi}{4},\frac{\pi}{4}]$ we have \[\zeta_1(0,0)=(\zeta_1)_{\theta}(0,0)=(\zeta_1)_{\theta\theta}(0,0)=0.\] Hence we obtain
\begin{align*}
(\partial_t\zeta_1)\big|_{(0,0)}=\left(\sigma_k h^{2-p}(\varphi_{\theta\theta}+(p+k-1)\varphi)\right)\big|_{(0,0)}.
\end{align*}
Pick any positive function such that
\[\varphi_\theta(0)=0,\quad((\varphi^{\frac{1}{p+k-1}})_{\theta\theta}+\varphi^{\frac{1}{p+k-1}})\big|_{\theta=0}<0.\]
For example, $\varphi(\theta)=(\cos^2(\theta)+\frac{1}{2})^{p+k-1}.$ So $\zeta_{1}(0,t)$ becomes negative for $t>0$ sufficiently small.

Since the solution to (\ref{exmaple}) depends continuously on the initial data, the nearby smooth, closed strictly convex hypersurfaces will lose smoothness.
Since $\sigma_n(0,0)=0$, the argument fails if $k=n$, as expected in view of the results in \cite{BIS} and also Theorem \ref{main theorem1a} here.
\end{proof}
\begin{remark}
It would be interesting to improve Theorem \ref{main theorem1} by allowing \[\bar{\nabla}_i\bar{\nabla}_j\varphi^{\frac{1}{p+k-1}}+\bar{g}_{ij}\varphi^{\frac{1}{p+k-1}}\] to be non-negative definite. We conclude the paper with the following questions.
\begin{Qu}
Is it possible to obtain a gradient bound for the support function of the normalized flow for $k<n, 1<p<k+1$ in the class of origin-symmetric solutions?
\end{Qu}
\begin{Qu}
If $k<n, p<2, \varphi\equiv1,$ what can be said about the asymptotic behavior of the flow?
\end{Qu}
\end{remark}
\bibliographystyle{amsplain}

\end{document}